
\documentclass[reqno, 11pt]{amsart}

\usepackage{amsthm,amsmath,amssymb,amsfonts,bm}
\usepackage{stmaryrd}
\usepackage{bbold}
\usepackage{times}
\usepackage[all]{xy}
\usepackage{mathrsfs}
\usepackage{graphicx}
\usepackage{tikz}
\usetikzlibrary{matrix,calc,arrows}
\usepackage{comment}

\textwidth=34pc
\oddsidemargin=30pt
\evensidemargin=30pt


\theoremstyle{plain}
\newtheorem{thm}{Theorem}
\newtheorem{lem}[thm]{Lemma}
\newtheorem{cor}[thm]{Corollary}
\newtheorem{prop}[thm]{Proposition}

\theoremstyle{definition}

\newtheorem*{claim}{Claim}
\newtheorem{example}[thm]{Example}

\newtheorem*{ack}{Acknowledgment}
\newtheorem*{notat}{Notations and conventions}

\theoremstyle{remark}


\newcommand{\Ker}{\operatorname{Ker}} 
\renewcommand{\Im}{\operatorname{Im}}
\newcommand{\Hom}{\operatorname{Hom}}
\newcommand{\End}{\operatorname{End}}
\newcommand{\Aut}{\operatorname{Aut}}
\newcommand{\Der}{\operatorname{Der}}

\newcommand{\Tens}{\mathsf{T}}

\newcommand{\rhk}{\text{\scriptsize $\rightharpoonup$}}

\renewcommand{\Dot}{\boldsymbol\cdot\,}

\newcommand{\ant}{{\mathsf{S}}}

\newcommand{\acts}{%
  \!\mathrel{\begin{tikzpicture}[scale=.8,baseline=(current  bounding  box.south)] 
  \useasboundingbox (-.6,-.2) rectangle (.1,.2);
  \node at (0,-.06) {} edge[out=210,in=150,loop] ();
\end{tikzpicture}}\!\!\!\!
}

\newcommand{\define}{\ \stackrel{\text{\rm def}}{=}\ }

\newcommand{\Spec}{\operatorname{Spec}}
\newcommand{\Fract}{\operatorname{Fract}}

\newcommand{\GL}{\operatorname{GL}}

\newcommand{\Id}{\operatorname{Id}}

\newcommand{\ch}{\operatorname{char}}

\newcommand{\onto}{\twoheadrightarrow}

\newcommand{\into}{\hookrightarrow}

\newcommand{\gen}[1]{\langle{#1}\rangle}

\newcommand{\til}[1]{\widetilde{#1}}

\newcommand{\Mat}{\operatorname{M}}

\renewcommand{\k}{\mathbb{k}}
\renewcommand{\phi}{\varphi}

\newcommand{\bdot}{\,\text{\raisebox{-.45ex}{$\boldsymbol{\cdot}$}}\,}

\newcommand{\HSpec}{H\text{-}\!\Spec}

\newcommand{\Units}[1]{{#1}^{\boldsymbol \times}}

\newcommand{\cO}{\mathcal{O}}

\newcommand{\cC}{\mathcal{C}}

\renewcommand{\d}{\delta}

\newcommand{\fg}{\mathfrak{g}}

\newcommand{\fp}{\mathfrak{p}}
\newcommand{\fP}{\mathfrak{P}}
\newcommand{\fa}{\mathfrak{a}}

\newcommand{\e}{\varepsilon}

\newcommand{\ZZ}{\mathbb{Z}}

\newcommand{\cat}[1]{\operatorname{\mathsf{#1}}}

\newcommand{\Rep}{\cat{Rep}}

\newcommand{\ModAlg}[1]{{}_{#1}\!\cat{Alg}}

\newcommand{\fin}[1]{{#1}_{\text{\rm fin}}}
\newcommand{\finO}[1]{{#1}_{\text{\rm fin}, \cO}}
\newcommand{\cen}{\mathcal{Z}}

\newcommand{\bn}{\boldsymbol{n}}
\newcommand{\br}{\boldsymbol{r}}
\newcommand{\bs}{\boldsymbol{s}}

\newcommand{\Q}{\operatorname{Q}\!}

\newcommand{\byG}{\text{\sl :} G}
\newcommand{\byH}{\text{\sl :} H}

\newdir{ >}{{}*!/-5pt/\dir{>}}

\newcommand{\iso}{%
\mathrel{\begin{tikzpicture}[scale=.8,baseline=(current  bounding  box.south), bij/.style={above,sloped,inner sep=0.5pt}] 
\useasboundingbox (-.3,-.12) rectangle (.3,.2);
\node(1) at (-.45,0) {};
\node(2) at (.45,0) {};
\path[->]
(1) edge node[bij] {$\sim$}  (2);
\end{tikzpicture}}
}


\begin{document}

\title[Actions of Cocommutative Hopf Algebras]%
{Actions of Cocommutative Hopf Algebras}

\author{Martin Lorenz}
\author{Bach Nguyen}
\author{Ramy Yammine}

\address{Department of Mathematics, Temple University,
    Philadelphia, PA 19122}


\subjclass[2010]{16T05, 16T20}

\keywords{Hopf algebra, action, quantum invariant theory, prime spectrum, 
stratification, prime ideal, semiprime ideal, integral action, rational action, algebraic group,
Lie algebra, derivation}

\maketitle

\begin{abstract}
Let $H$ be a cocommutative Hopf algebra acting on an algebra $A$. Assuming 
the base field to be algebraically closed and the $H$-action on $A$ to be integral, that is, it is
given by a coaction of some Hopf subalgebra of the finite dual $H^\circ$
that is an integral domain, we stratify the prime spectrum
$\Spec A$ in terms of the prime spectra of certain commutative algebras.
For arbitrary $H$-actions in characteristic $0$, we show that
the largest $H$-stable ideal of $A$ that is
contained in a given semiprime ideal of $A$ is semiprime as well. 
\end{abstract}

\maketitle


\section*{Introduction}

\subsection{} 
\label{SS:action}
Let $H$ be a Hopf algebra over a field $\k$ and let $A$ be an arbitrary associative $\k$-algebra.
An \emph{action} of $H$ on $A$ is given by a $\k$-linear map
$H \otimes A \to A$, $h\otimes a \mapsto h.a$, that makes $A$ into a 
left $H$-module and satisfies the ``measuring'' conditions
$h.(ab) = (h_1.a)(h_2.b)$ and $h.1 = \gen{ \e,h } 1$ for $h\in H$ and $a,b \in A$.
Here, $\otimes = \otimes_\k$\,, $\Delta h = h_1 \otimes h_2$ denotes
the comultiplication of $H$, and $\e$ is the counit. 
We will write $H \acts A$ to indicate such an action.
Algebras equipped with an $H$-action are called left \emph{$H$-module algebras}.
With algebra maps that are also $H$-module maps as morphisms, left $H$-module algebras
form a category, $\ModAlg H$. 
 
For example, an action of a group algebra $\k G$ on $A$ amounts to the datum of a group homomorphism 
$G \to \Aut A$, the automorphism group of the algebra $A$. 
For the enveloping algebra $U\fg$ of a Lie $\k$-algebra $\fg$, an action $U\fg \acts A$ 
is given by a Lie homomorphism $\fg \to \Der A$, the Lie algebra of
all derivations of $A$. In both these prototypical cases, the acting Hopf algebra is cocommutative.
This article investigates the effect of a given action $H\acts A$
of an arbitrary cocommutative Hopf algebra $H$ on the prime and semiprime ideals of $A$,
partially generalizing prior work of the first author on rational actions of algebraic groups
\cite{mL08}, \cite{mL09}, \cite{mL13}. The interesting article \cite{sS10} by Skryabin covers related
territory, but the actual overlap with our work is insubstantial.


\subsection{}
\label{SS:IntroCores}
For now, let $H$ be arbitrary and let $A \in \ModAlg H$.
An ideal $I$ of $A$ that is also an $H$-submodule of $A$ is called an \emph{$H$-ideal}.
The action $H \acts A$ then passes down to an $H$-action on the quotient algebra $A/I$.
The sum of all $H$-ideals that are
contained in an arbitrary ideal $I$, clearly the unique largest $H$-ideal of $A$ 
that is contained in $I$, will be referred to as
the \emph{$H$-core} of $I$ and denoted by $I\byH$. Explicitly,
\begin{equation} 
\label{E:IbyH}
I\byH = \{a \in A \mid H.a \subseteq I\}.
\end{equation}
If $A \neq 0$ and the product of any two
nonzero $H$-ideals of $A$ is again nonzero, then $A$ is said to be \emph{$H$-prime}. 
An $H$-ideal $I$ of $A$ is called \emph{$H$-prime} if $A/I$
is $H$-prime. It is easy to see that $H$-cores of prime ideals are $H$-prime. Denoting
the collection of all $H$-primes of $A$ by $\HSpec A$, we thus obtain a map
$\Spec A \to \HSpec A$\,, $P \mapsto P\byH$.
The fibers 
\begin{equation*} 
\Spec_IA \define \{ P \in \Spec A \mid P\byH = I \}
\end{equation*}
are called the \emph{$H$-strata} of $\Spec A$. 
The stratification $\Spec A = \bigsqcup_{I \in \HSpec A} \Spec_IA$ was pioneered by Goodearl and Letzter \cite{kGeL00} 
in the case of group actions or, equivalently, actions of group algebras.
It has proven to be a useful tool for investigating $\Spec A$, especially
for rational actions of a connected affine algebraic group $G$ over an algebraically closed
field $\k$. In this case, one has a description of each stratum $\Spec_IA$ in terms of the prime spectrum 
of a suitable \emph{commutative} algebra \cite[Theorem 9]{mL09}. 
Our principal goal is to generalize this result to the context of cocommutative Hopf algebras.
This is carried out in Section~\ref{S:Strat} of this article. 
The first two sections serve to deploy
some generalities on actions of Hopf algebras that are needed for the proof, with Section~\ref{S:Cocommutative}
focusing on the cocommutative case.


\subsection{}
\label{SS:IntroIntegralAction}
To state our main result, we make  the following observations; 
see Sections~\ref{S:Actions}-\ref{S:Strat} for details.
Let $H$ be cocommutative and $\k$ algebraically closed. Assume that
the action $H \acts A$ is locally finite, that is, $\dim_\k H.a < \infty$ for all $a \in A$.
Then $A$ becomes a right comodule algebra over the (commutative) finite dual $H^\circ$ of $H$:
\begin{equation}
\label{E:LocFin}
\begin{tikzpicture}[baseline=(current  bounding  box.359), >=latex, scale=.7,
bij/.style={above,sloped,inner sep=0.5pt}]
\matrix (m) [matrix of math nodes, 
column sep=2em, text height=1.5ex, text depth=0.25ex]
{A & A \otimes H^\circ \\ 
a &  a_0 \otimes a_1 \\ };
\draw[->] (m-1-1) edge (m-1-2);
\draw[|->] (m-2-1) edge (m-2-2);
\end{tikzpicture} 
\quad\text{with}\quad h.a = a_{0} \gen{a_1,h}
\hspace{.4in} (h\in H, a\in A).
\end{equation}
The action $H \acts A$ will be called \emph{integral} if the image of the map
\eqref{E:LocFin} is contained in $A \otimes \cO$ for some Hopf subalgebra 
$\cO \subseteq H^\circ$ that is an integral domain. This condition serves as a replacement for connectedness
in the case of algebraic group actions. Assuming it to be satisfied,
it follows that each $I \in \HSpec A$ is in fact a prime ideal of $A$. Consequently, the extended
center $\cC(A/I)= \cen\Q\,(A/I)$ is a $\k$-field, where $\Q\,(A/I)$ denotes the symmetric ring of quotients 
of $A/I$. The action $H \acts A/I$ extends uniquely to an $H$-action on $\Q\,(A/I)$ and this action 
stabilizes the center $\cC(A/I)$.
Furthermore, $\cO \in \ModAlg H$ via the ``hit'' action $\rhk$ that is given by
$\gen{h \rhk f,k} = \gen{f,kh}$ for $f \in \cO$ and $h,k \in H$. 
The actions $\rhk$ and $H \acts \cC(A/I)$ combine to an $H$-action on the tensor product; so
\begin{equation}
\label{E:S_I}
C_I := \cC(A/I) \otimes \cO \in \ModAlg H.
\end{equation}
The algebra $C_I$ is a commutative integral domain.
We let $\Spec^H\!C_I$ denote the subset of $\Spec C_I$ consisting of all prime $H$-ideals of $C_I$\,.

\begin{thm}
\label{T:Strat}
Let $H$ be a cocommutative Hopf algebra over an algebraically closed field $\k$ and 
let $A$ be a $\k$-algebra that is equipped with an integral action $H \acts A$. 
Then, for any  $I \in \HSpec A$, there is a bijection
\[
c \colon \Spec_I A \iso \Spec^H\!C_I 
\]
having the following properties, for any $P,P' \in \Spec_I A$: 
\begin{enumerate}
\item[(i)]
$c(P) \subseteq c(P')$ if and only if $P \subseteq P'$, and
\item[(ii)]
$\Fract(C_I/c(P)) \cong \cC((A/P) \otimes \cO)$.
\end{enumerate}
\end{thm}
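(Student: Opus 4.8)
The plan is to construct the bijection $c$ by reducing the general $H$-action on $A$ to an action by a cocommutative Hopf algebra $\cO^\circ$ (or rather an algebraic-group-type object via $\cO$) and then invoking—after suitable translation—the known result for rational actions of connected affine algebraic groups, namely \cite[Theorem 9]{mL09}. Concretely, for a prime ideal $P$ with $P\byH = I$, the action $H\acts A/P$ extends to the symmetric ring of quotients $\Q(A/P)$, stabilizing the extended center $\cC(A/P)$, which is a $\k$-field. The comodule structure \eqref{E:LocFin} restricts to $A/P$, landing in $(A/P)\otimes\cO$, and one checks it extends compatibly to $\Q(A/P)$ and to $\cC(A/P)$. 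Since $\cC(A/P)\otimes\cO$ is a commutative domain on which $H$ acts, one obtains a natural $\k$-algebra map
\[
\gamma_P \colon C_I = \cC(A/I)\otimes\cO \tto \cC(A/P)\otimes\cO,
\]
induced by the inclusion $\cC(A/I)\subseteq\Q(A/P)$ composed with the comultiplication-type structure (this uses that $I\subseteq P$, so that $A/I \onto A/P$ and the extended centers are related). I would then \emph{define} $c(P) := \ker\gamma_P$, and the content of property (ii) is precisely that $\gamma_P$ becomes an isomorphism onto its image after passing to fraction fields, i.e. $\Fract(C_I/c(P))\cong \cC((A/P)\otimes\cO)$.

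The key steps, in order: (1) Establish that $c(P)$ is a prime $H$-ideal of $C_I$, using that $\gamma_P$ is an $H$-equivariant map of $H$-module algebras into a domain; primeness of the kernel is automatic since the target is a domain, and $H$-stability follows from $H$-equivariance. (2) Prove injectivity of $c$: if $c(P) = c(P')$ then one must recover $P$ from $c(P)$. The recipe should be $P = \{a\in A/I : \text{the image of }a\text{ in }(A/I)\otimes\cO \text{ lies in } c(P)\otimes_{\cC(A/I)}C_I\text{'s vanishing locus}\}$—more precisely, $P/I$ is recovered as the preimage under the structure map $A/I \to (A/I)\otimes\cO \to \cC(A/I)\otimes\cO = C_I$ of the ideal $c(P)$; here one uses that $A/I$ embeds $H$-equivariantly into its own "rational closure" and that $P\byH = I$ forces $P$ to be "$H$-rational over $I$", so it is detected by $C_I$. (3) Prove surjectivity: given a prime $H$-ideal $\fq$ of $C_I$, pull it back along $A/I \to C_I$ to get an ideal of $A/I$; show its preimage $P$ in $A$ is prime with $P\byH = I$ and $c(P) = \fq$. (4) Verify the order-preservation (i): $c(P)\subseteq c(P')$ iff $P\subseteq P'$, which follows from the explicit recovery formula in (2) together with functoriality of the construction $P\mapsto\gamma_P$. (5) Prove (ii) by identifying $\Fract(C_I/c(P)) = \Fract(\Im\gamma_P)$ with $\cC((A/P)\otimes\cO)$; the inclusion $\Im\gamma_P\subseteq\cC(A/P)\otimes\cO$ generates the fraction field because $\cO\subseteq\Im\gamma_P$ and $\cC(A/I)$ maps onto a subfield whose fraction field with $\cO$ exhausts $\cC(A/P)\otimes\cO$ up to localization.

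**The main obstacle** I expect is step (2)/(3)—showing the construction is a bijection, and in particular pinning down exactly how $P$ is recovered from $c(P)$. The delicate point is that $\cC(A/P)$ can be a strictly larger field than $\cC(A/I)$, so $\gamma_P$ is genuinely not surjective; one needs a clean statement that "the $H$-core being $I$" is equivalent to $P$ being cut out, inside $A/I$, by a prime $H$-ideal of the specific algebra $C_I$—this is the heart of the stratification and is where the hypothesis of \emph{integrality} of the action (giving a domain $\cO$, hence $C_I$ a domain, hence well-behaved generic fibers) does the real work. A secondary technical obstacle is checking that all the extension-to-quotients constructions ($H$-action on $\Q(A/P)$, the comodule structure extending to $\Q(A/P)$ and restricting to $\cC(A/P)$, compatibility with the map $A/I\onto A/P$) are genuinely functorial and $H$-equivariant; these are routine in spirit but require the general machinery on symmetric rings of quotients and extended centers developed in the earlier sections, and the cocommutativity of $H$ is used to ensure $\cO = H^\circ$-type objects are commutative so that $C_I$ is commutative in the first place.
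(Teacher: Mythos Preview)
Your proposal has a genuine gap at the very first step: the map $\gamma_P \colon C_I \to \cC(A/P)\otimes\cO$ does not exist by the recipe you give. You write that it is ``induced by the inclusion $\cC(A/I)\subseteq\Q(A/P)$,'' but there is no such inclusion. The canonical map $A/I \to A/P$ is a \emph{surjection} with kernel $P/I \neq 0$, and the symmetric ring of quotients is not functorial for surjections: for instance, if $A/I = \k[x]$ and $P/I=(x)$, then $\cC(A/I)=\k(x)$ while $\cC(A/P)=\k$, and there is no algebra map $\k(x)\to\k$. The same problem infects your recovery formula in step~(2): you speak of ``the structure map $A/I \to (A/I)\otimes\cO \to \cC(A/I)\otimes\cO = C_I$,'' but the second arrow would require an algebra map $A/I \to \cC(A/I)$, which does not exist when $A/I$ is noncommutative. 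So neither $c(P)$ nor its putative inverse is actually defined.

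The paper circumvents this entirely. Rather than trying to map $C_I$ to anything built from $A/P$, it works inside the convolution algebra $A_\cO = A\otimes\cO$ and exploits the algebra automorphism $\Psi$ (available because $H$ is cocommutative). After reducing to $I=0$, it sends $P\in\Spec_0 A$ to $P' := \Psi(P\otimes\cO)$, which is a prime $H$-ideal of $A_\cO$ with $P'\cap A = 0$. This bijection onto such primes uses Proposition~\ref{P:DotInv}(d). The passage from primes of $A_\cO$ contracting to $0$ in $A$ over to $\Spec C_0$ then goes through the \emph{central closure} $\til A$: one writes $\til A_\cO \cong \til A \otimes_K C_0$ with $K=\cC A$, and applies two general prime-correspondence results (Propositions~\ref{P:bij1} and~\ref{P:bij2}) that handle centrally closed prime algebras and their central closures. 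The $\Psi$-twist is the missing idea that makes the $H$-stability bookkeeping work, and the central closure is what lets you compare $A_\cO$ with $C_0$ without ever needing a map $\cC(A/I)\to\cC(A/P)$.
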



\subsection{}
\label{SS:StratExamples}
As a first example, 
let $G$ be an affine algebraic $\k$-group and let $\cO = \cO(G)$ be the algebra of polynomial functions on $G$. 
Then $\cO \subseteq H^\circ$ with $H = \k G$. 
A rational $G$-action on $A$, by definition, is a locally finite action $H \acts A$ such that the image
of \eqref{E:LocFin} is contained in $A \otimes \cO$. If $G$ is connected, then $\cO$ is an integral domain
and so the action is integral. In this setting, Theorem~\ref{T:Strat} is covered by \cite[Theorem 9]{mL09}.
 
Next, let $\fg$ be a Lie $\k$-algebra acting by derivations on $A$  
and assume that every $a \in A$ is contained
in some finite-dimensional $\fg$-stable subspace of $A$. 
With $H = U\fg$, we then have a locally finite action $H \acts A$
and hence a map \eqref{E:LocFin}. If $\ch \k = 0$, then the convolution algebra $H^*$
is a power series algebra over $\k$ and hence
$H^*$ is a commutative domain; see \S\ref{SS:Env} below. Since $H^\circ$ is a subalgebra of $H^*$,
we may take $\cO = H^\circ$ and so we have an integral action. Theorem~\ref{T:Strat} appears to be
new in this case.


\subsection{}
\label{SS:IntroSemiprime}

In Section~\ref{S:Semi}, we show that if $\ch\k = 0$ and $H$ is cocommutative, then the 
core operator $\bdot\byH$ preserves semiprimess. Recall that
an ideal $I$ of $A$ is called \emph{semiprime} if $I$ is
an intersection of prime ideals or, equivalently,
$A/I$ has no nonzero nilpotent ideals.
In Section~\ref{S:Semi}, we turn to the question as to whether 
$I\byH$ is then semiprime as well.
This question may be reformulated in various alternative
ways (Lemma~\ref{L:Reformulation}). In general, the answer is negative: 
even for a cocommutative Hopf algebra $H$, semiprimeness may be lost upon
passage to the $H$-core. For instance, consider the group algebra
$A = \k G$ with its standard $G$-grading. For $G$ finite, this grading amounts to an
action of the Hopf dual $H = (\k G)^*$ on $A$. The only $H$-ideals of $A$  
are $0$ and $A$; so the $H$-core of every proper ideal of $A$ is $0$. If $G$ is abelian, then
$H$ is cocommutative. If the operator $\bdot\byH$ 
is to preserve semiprimeness for any such $G$, then we must require that
$\ch \k = 0$ by Maschke's Theorem. It turns out that this is also sufficient in general:

\begin{thm}
\label{T:Semi}
Let $A \in \ModAlg H$ and assume that 
$H$ is cocommutative and $\ch \k = 0$.
Then $I\byH$ is semiprime for every semiprime ideal $I$ of $A$.
\end{thm}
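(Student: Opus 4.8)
The plan is to reduce to the case where $I = 0$, so that $A$ is semiprime and we must show that the $H$-core $0\byH$ (the largest $H$-ideal contained in the nilradical-free algebra $A$... wait, let me reconsider).

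Actually, let me think about this more carefully.The plan is to reduce the statement to a question about nilpotent elements and then exploit cocommutativity together with the characteristic-zero hypothesis to run a ``freshman's dream'' type argument. Replacing $A$ by $A/I\byH$, we may assume $I\byH = 0$ and must show that if $I$ is semiprime then $A$ itself is semiprime; equivalently (passing to $A/I$ and using that $0\byH$ in $A/I$ pulls back to $I\byH = 0$), it suffices to prove: if $A$ has a nonzero nilpotent ideal, then $A$ has a nonzero nilpotent \emph{$H$-ideal}. So the real content is that the nilradical $\Brad A$ (the sum of all nilpotent ideals, i.e.\ the lower nilradical) contains a nonzero $H$-ideal whenever it is itself nonzero — better yet, that $\Brad A$ is an $H$-ideal, since an $H$-stable nil ideal would give, after Lemma~\ref{L:Reformulation}, exactly what is needed. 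Thus the target reduces to: \emph{for $H$ cocommutative and $\ch\k = 0$, the lower nilradical $\Brad A$ is $H$-stable}, and one then observes it is locally nilpotent and a nonzero locally nilpotent $H$-ideal contains a nonzero nilpotent $H$-ideal (or handle local nilpotence directly in the reformulation of Lemma~\ref{L:Reformulation}).

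To prove $H$-stability of the nilradical, first reduce to the locally finite case: for a fixed nilpotent element $a$ and $h \in H$, only a finite-dimensional subcoalgebra $C \subseteq H$ containing $h$ is relevant, and since $H$ is cocommutative, $C$ generates a finitely generated, hence locally finite, sub-Hopf-algebra; so we may assume $H \acts A$ is locally finite and work through the coaction $A \to A \otimes H^\circ$ of \eqref{E:LocFin}. Now use the structure of cocommutative Hopf algebras developed in Section~\ref{S:Cocommutative}: over an algebraically closed field of characteristic $0$, a cocommutative Hopf algebra is a smash product $U\fg \# \k G$ (Cartier–Gabriel–Kostant), so it suffices to treat separately the cases $H = \k G$ and $H = U\fg$. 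For $H = \k G$ the nilradical is automatically $G$-stable since $G$ acts by algebra automorphisms (the image of a nilpotent ideal under an automorphism is a nilpotent ideal) — no characteristic hypothesis needed here. The essential case is therefore $H = U\fg$, i.e.\ a single derivation $\delta$ (and its $\fg$-span) acting on $A$: one must show $\delta(\Brad A) \subseteq \Brad A$.

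The key step — and the expected main obstacle — is showing that a derivation preserves the lower nilradical in characteristic $0$. The standard trick: if $N$ is a nilpotent ideal with $N^n = 0$, one wants $\delta(N)$ to again lie in a nilpotent ideal; form $M = N + \delta(N) + \delta^2(N) + \cdots$, the smallest $\delta$-stable ideal containing $N$. In characteristic $0$ one shows $M$ is again nil (indeed nilpotent or at least locally nilpotent) by the classical computation that for $x \in N$ the ``higher divided powers'' $\delta^{(k)}(x) := \delta^k(x)/k!$ satisfy a Leibniz-type formula so that products of enough of them vanish — this is where division by factorials, hence $\ch\k = 0$, is indispensable, and it is exactly the point at which the Maschke-type obstruction in the Introduction's counterexample is circumvented. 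Concretely, one argues that $\Brad A$ is a \emph{differential ideal} for every derivation by a filtration/nilpotency-degree induction, invoking that $A/\Brad A$ is semiprime and that a semiprime ring has no nonzero nil one-sided ideals to control the induced action. Once $\Brad A$ is known to be $\fg$-stable (hence $H$-stable for $H = U\fg$), combine the $\k G$ and $U\fg$ cases via the smash-product decomposition, pass back from the locally finite reduction, and invoke Lemma~\ref{L:Reformulation} to conclude that $I\byH$ is semiprime for every semiprime $I$. I expect the only genuinely delicate point to be making the characteristic-zero nilpotency estimate uniform enough to handle the lower (rather than upper) nilradical, i.e.\ an arbitrary sum of nilpotent ideals of unbounded index; this may require first disposing of the locally nilpotent case and then bootstrapping, or directly citing a known result that derivations preserve the lower nilradical of an algebra over a field of characteristic $0$.
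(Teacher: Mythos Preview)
Your overall strategy matches the paper's: reduce to an algebraically closed base field, invoke the Cartier--Gabriel--Kostant decomposition $H \cong U\fg \,\#\, \k G$, and treat the group-algebra and enveloping-algebra factors separately, with the group case being trivial since automorphisms preserve semiprimeness. Your reformulation via Lemma~\ref{L:Reformulation}(i) (stability of the nilradical) is equivalent to what the paper does directly with cores, and the factorization $I\byH = (I\text{\sl :}U)\text{\sl :}V$ is exactly how the paper combines the two pieces.

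The genuine difference is in the enveloping-algebra step, and here the paper's route is both cleaner and stronger than what you sketch. Rather than attempting a Leibniz-rule nilpotency estimate for a single derivation (the point you correctly flag as delicate, with the unbounded-index issue for the lower nilradical), the paper observes that in characteristic~$0$ the PBW basis gives an algebra isomorphism $\Hom_\k(U\fg, R) \cong R\llbracket X_\lambda \rrbracket_{\lambda\in\Lambda}$, identifies $A/(I\text{\sl :}U\fg)$ with a subring $S$ of $(A/I)\llbracket X_\lambda \rrbracket$ that surjects onto $A/I$ under $X_\lambda \mapsto 0$, and then proves a clean lemma (Lemma~\ref{L:primeness}): any such $S$ inherits primeness, semiprimeness, or domain-ness from $A/I$ via a lowest-term argument with respect to a monomial well-order. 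This handles all of $\fg$ at once, avoids any induction on nilpotency degree, and as a bonus yields the stronger Proposition~\ref{P:primeness} (primeness and complete primeness are also preserved by $\bdot\text{\sl :}U\fg$), which your derivation-by-derivation approach would not easily give.

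Your detour through a locally finite reduction and the coaction \eqref{E:LocFin} is unnecessary for this theorem and somewhat muddled; neither the paper's argument nor the derivation-stability approach needs it. You should also make the base-change to algebraically closed $\k$ explicit before invoking Cartier--Gabriel--Kostant; the paper carries this out in \S\ref{SS:Reduction}.
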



\subsection{}
\label{SS:Future}

In future work, we hope to pursue the general theme of this article for Hopf algebras
that are not necessarily cocommutative.
In particular, we plan to address ``rationality'' of prime ideals and explore the Dixmier-M{\oe}glin
equivalence in the context of Hopf algebra actions, generalizing the work on group actions
in \cite{mL08}, \cite{mL09}.


\begin{notat}
We work over a base field $\k$ and continue to write $\otimes = \otimes_\k$. 
Throughout, $H$ is a Hopf $\k$-algebra, cocommutative when so specified. The
counit and the antipode of $H$ will be denoted by $\e$ and $\ant$, respectively. Furthermore,
we fix the following notations for the remainder of this article:
\begin{itemize}
\item[]
$A$ will be a left $H$-module algebra, with $H$-action $h \otimes a \mapsto h.a$ 
$(h \in H, a \in A)$;
\item[]
$A^H = \{ a \in A \mid h.a = \gen{\e,h} a \text{ for all } h \in H \}$ is
the subalgebra of $H$-invariants;
\item[]
$B = \Hom_\k(H,A)$ will denote the convolution algebra.
\end{itemize}
\end{notat}


\section{Generalities on actions} 
\label{S:Actions}


\subsection{The convolution algebra $B$}
\label{SS:Conv}

We begin by introducing certain subalgebras  and automorphisms of $B$ as well as some $H$-operations
on $B$ that will be used throughout this article.

First, the counit of $H$ and the action $H \acts A$ give rise to the following maps
$\iota, \d \colon A \to B$, which are easily
seen to be $\k$-algebra embeddings:
\begin{equation*}
\iota a = (h \mapsto \gen{ \e,h } a) \qquad
\d a = (h \mapsto h.a) \hspace{.6in} (h \in H, a \in A).
\end{equation*}
We will generally identify $A$ with $\iota A$ and we will also identify the linear dual $H^*$ 
with the image of the algebra map
$u_* \colon H^* \into B$ that is given by the unit $u \colon \k \to A$. In this way, we view $A$, $H^*$, and
$A \otimes H^*$ as $\k$-subalgebras of $B$, with $A \otimes H^*$
consisting of all $\k$-linear maps $H \to A$ that have finite rank.
Explicitly, $a \otimes f = (h \mapsto a\gen{f,h})$, $\iota a = a \otimes \e$ and $u_* f = 1\otimes f$  
for $a \in A$, $f \in H^*$.

The algebra $B$ becomes a left $H$-module algebra via the
$\rhk$-action, which is defined by
\begin{equation}
\label{E:rhk}
(h\rhk b)(k) = b(kh) \hspace{.6in} (h,k \in H, b \in B).
\end{equation} 
We will write $(B,\rhk)$ when viewing $B \in \ModAlg H$ with \eqref{E:rhk}.
The map $\d \colon A \to (B,\rhk)$ is a morphism in $\ModAlg H$ and $A \otimes H^*$ is an $H$-module subalgebra
of $(B,\rhk)$. 

Our main focus later will be on the following alternative left $H$-module structure on $B$, which
takes into account the given action $H \acts A$:
\begin{equation}
\label{E:dot}
(h\Dot b)(k) = h_1.b(kh_2).
\end{equation} 
If $H$ is cocommutative, then $B \in \ModAlg H$ with  
\eqref{E:dot} as well; see Proposition~\ref{P:DotInv} below.
In general, we may pass between \eqref{E:rhk} and \eqref{E:dot} by means of the $\k$-linear automorphisms 
$\Phi , \Psi \colon B \iso B$ that are defined by
\begin{equation*}
\Phi b = ( h \mapsto  h_1.b(h_2))
\quad \text{and} \quad
\Psi b = ( h \mapsto \ant(h_1).b(h_2)). 
\end{equation*}
These maps are inverse to each other and they 
satisfy the following ``intertwining'' formulas, for any $a \in A$, $b \in B$ and $h \in H$: 
\begin{equation}
\label{E:e*d}
\Phi((\iota a)b) = \d a \, \Phi b, \quad 
\Psi((\d a) b) = \iota a \, \Psi b
\end{equation}
\begin{equation}
\label{E:Inter}
\Phi(h \Dot b) = h \rhk (\Phi b), \quad h \Dot (\Psi b) = \Psi(h \rhk b).
\end{equation}
The $\Psi$-identities follow from those for $\Phi = \Psi^{-1}$.
To verify \eqref{E:e*d}, one checks that both sides send a given $h \in H$ to
$h_1.\big(a\,b(h_2)\big)$. For \eqref{E:Inter}, observe that $(\Phi b)(h) = (h \Dot b)(1)$ and so
$\Phi(h \Dot b)(k) = (kh \Dot b)(1) = (\Phi b)(kh) = (h \rhk (\Phi b))(k)$ for $h,k \in H$ and $b \in B$.
Finally, note that $\Phi$ and $\Psi$ fix the unit element $1_B = u_*(\e)$ and they are right linear 
for the subalgebra $\Hom_\k(H,A^H) \subseteq B$.
In particular, $\Phi$ and $\Psi$ restrict to the identity map on $H^*$ and are right $H^*$-linear.


\subsection{Invariants and the locally finite part of $(B,\rhk)$}
\label{SS:Invar}

The \emph{locally finite part} of any $H$-module algebra $A$ is defined by
\[
\begin{aligned}
\fin A :&= \{ a\in A \mid \dim_\k H.a < \infty \} \\
&= 
\{ a\in A \mid I.a = 0 \text{ for some cofinite ideal $I$ of $H$} \}.
\end{aligned}
\]
Here, ``cofinite'' is short for ``having finite codimension."
The locally finite part $\fin A$ is always
an $H$-module subalgebra of $A$ containing the algebra of invariants, $A^H$.

The following lemma determines the invariants and
the locally finite part of $(B,\rhk)$.

\begin{lem}
\label{L:B_0}
\begin{enumerate}
\item
$(B,\rhk)^H = \iota A \cong A$;
\item
$\fin{(B,\rhk)} = \{ b \in B \mid \Ker b \text{  contains some cofinite ideal of } H\} 
\cong A \otimes H^\circ$.
\end{enumerate}
\end{lem}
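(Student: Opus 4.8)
The plan is to treat the two parts together by first analyzing the $\rhk$-action concretely. Recall from \eqref{E:rhk} that $(h \rhk b)(k) = b(kh)$, so the $\rhk$-action only ``sees'' the source $H$ and ignores the target $A$; in particular it is nothing but the natural left $H$-module structure on $\Hom_\k(H,A) = B$ induced by the right regular action of $H$ on itself. For (a): $b \in (B,\rhk)^H$ means $b(kh) = \gen{\e,h}\, b(k)$ for all $h,k \in H$. Taking $k = 1$ gives $b(h) = \gen{\e,h}\, b(1)$, i.e.\ $b = \iota(b(1)) \in \iota A$; conversely every $\iota a$ is visibly $\rhk$-invariant since $(\iota a)(kh) = \gen{\e,kh} a = \gen{\e,k}\gen{\e,h} a$. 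Thus $(B,\rhk)^H = \iota A \cong A$, the last isomorphism being the algebra embedding $\iota$ (with inverse $b \mapsto b(1)$).

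For (b) I would argue in three steps. First, the equality $\fin{(B,\rhk)} = \{ b \in B \mid \Ker b \supseteq I \text{ for some cofinite ideal } I \trianglelefteq H\}$: by the second description of the locally finite part recalled just before the lemma, $b \in \fin{(B,\rhk)}$ iff $I \rhk b = 0$ for some cofinite ideal $I$ of $H$, and $(I \rhk b) = 0$ means $b(kh) = 0$ for all $k \in H$, $h \in I$, which (taking $k=1$, and conversely using that $I$ is a left ideal) is exactly $b(I) = 0$, i.e.\ $I \subseteq \Ker b$. Second, for the identification with $A \otimes H^\circ$: a map $b \colon H \to A$ kills a cofinite ideal $I$ iff it factors through the finite-dimensional quotient algebra $H/I$, i.e.\ $b \in \Hom_\k(H/I, A) = A \otimes (H/I)^* \subseteq A \otimes H^*$ (using $\dim_\k H/I < \infty$). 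Taking the union over all cofinite ideals $I$, we get $\fin{(B,\rhk)} = \sum_I A \otimes (H/I)^* = A \otimes \bigl(\bigcup_I (H/I)^*\bigr)$, and $\bigcup_I (H/I)^* = H^\circ$ is precisely the finite dual of $H$ (the union being over cofinite \emph{ideals}, which is cofinal among cofinite subspaces closed under the relevant operations — this is the standard description of $H^\circ$). One should check this identification is compatible with the algebra structures, but on $A \otimes H^*$ the multiplication in $B$ restricts to the tensor-product algebra structure $(a \otimes f)(a' \otimes f') = aa' \otimes ff'$, as noted in \S\ref{SS:Conv}, so $A \otimes H^\circ$ inherits exactly the $H^\circ$-comodule-algebra--type structure and the identification is an algebra isomorphism.

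The main obstacle is the bookkeeping in step two of part (b): verifying cleanly that $\bigcup_I (H/I)^* = H^\circ$ as the $I$ range over \emph{cofinite two-sided ideals} (rather than arbitrary cofinite subspaces), and that the natural maps $A \otimes (H/I)^* \hookrightarrow B$ assemble into an isomorphism $A \otimes H^\circ \xrightarrow{\sim} \fin{(B,\rhk)}$ that respects multiplication. Everything else is a direct unravelling of the definition of $\rhk$ together with the two equivalent descriptions of $\fin{(\,\cdot\,)}$ already recorded before the statement.
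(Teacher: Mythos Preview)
Your proposal is correct and matches the paper's own proof essentially line for line: part (a) is the same direct unraveling (the paper writes the equivalence $b(kh)=\gen{\e,h}b(k)\Leftrightarrow b=\iota(b(1))$ exactly as you do), and for part (b) the paper also observes that $I\rhk b=0$ is equivalent to $I\subseteq\Ker b$ and then identifies the resulting set of maps with $A\otimes H^\circ$. The only cosmetic difference is that, for the last identification, the paper invokes the finite-rank description $A\otimes H^*\subseteq B$ from \S\ref{SS:Conv} and restricts it, whereas you factor through $H/I$ and take the union over cofinite ideals; these are the same content, and your ``bookkeeping obstacle'' dissolves into the standard definition of $H^\circ$.
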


\begin{proof}
(a)
Note that $b \in (B,\rhk)^H$ if and only if $b(kh) = \gen{\e,h}b(k)$ for all $h,k \in H$. Equivalently,
$b(h) = \gen{\e,h}b(1)$ for all $h \in H$, which in turn states that $b = \iota(b(1))$.
The assertion about $H$-invariants follows. 

(b)
For any ideal $I$ of $H$ and any $b \in B$, the equality $I \rhk b = 0$ is equivalent to $I \subseteq \Ker b$;
so $b \in \fin{(B,\rhk)}$ if and only if $\Ker b$ contains some cofinite ideal of $H$.
In particular, $\iota A \subseteq \fin{(B,\rhk)}$ and the embedding $u_* \colon H^* \into B$ 
sends the finite dual $H^\circ$ to $\fin{(B,\rhk)}$.
The isomorphism between the subalgebra of $B$ consisting of all finite-rank maps 
and $A\otimes H^*$ (\S\ref{SS:Conv}) restricts to an
isomorphism of subalgebras, $\fin{(B,\rhk)} \cong A \otimes H^\circ$.
\end{proof}

The isomorphisms in Lemma~\ref{L:B_0} will be treated as identifications as in \S\ref{SS:Conv}. In particular,
we will write $A \otimes H^\circ = \fin{(B,\rhk)}$
and identify $A$ and $H^\circ$ with the subalgebras $\iota A = A \otimes \e$ 
and $u_* H^\circ = 1 \otimes H^\circ$, respectively.
As was mentioned, $\fin{(B,\rhk)}$ is an $H$-module subalgebra of $(B,\rhk)$; explicitly, 
with $b = a \otimes f$ $(a \in A, f \in H^\circ)$ formula \eqref{E:rhk} becomes
\begin{equation}
\label{E:rhk'}
h\rhk  (a\otimes f) = a \otimes (h \rhk f) = a \otimes f_1\gen{f_2,h}.
\end{equation}
Furthermore, $\fin{(B,\rhk)} = A \otimes H^\circ $ is also stable under the $H$-operation \eqref{E:dot},
which becomes the standard Hopf operation on tensor products:
\begin{equation}
\label{E:dot'}
h \Dot (a\otimes f) = h_1.a \otimes (h_2 \rhk f) = h_1.a \otimes f_1\gen{f_2,h_2}. 
\end{equation}

More generally, for any Hopf subalgebra $\cO \subseteq H^\circ$, 
we will consider the subalgebra
\[
A_{\cO}:=A \otimes \cO \subseteq \fin{(B,\rhk)}\,.
\] 
Each such $A_{\cO}$ is stable under both
$\rhk$ and $\Dot$ by \eqref{E:rhk'} and \eqref{E:dot'}.

\begin{lem}
\label{L:Stability}
Let $\cO \subseteq H^\circ$ be a Hopf subalgebra.
The $\k$-subspaces of $A_{\cO}$ 
that are stable under right multiplication by $\cO$ and under the
$\rhk$-action \eqref{E:rhk'} are exactly those of the form $W \otimes \cO$,
where $W$ is an arbitrary $\k$-subspace of $A$.
\end{lem}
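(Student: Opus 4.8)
The plan is to show the two inclusions. One direction is trivial: any subspace of the form $W \otimes \cO$ with $W \subseteq A$ a $\k$-subspace is obviously stable under right multiplication by $1 \otimes \cO$, and by \eqref{E:rhk'} the $\rhk$-action only moves the $\cO$-component, so $h \rhk (W \otimes \cO) = W \otimes (h \rhk \cO) \subseteq W \otimes \cO$. The content is the converse: given $V \subseteq A_\cO = A \otimes \cO$ stable under right multiplication by $\cO$ and under $\rhk$, I want to produce a $\k$-subspace $W \subseteq A$ with $V = W \otimes \cO$.

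First I would fix a $\k$-basis $\{f_i\}_{i \in \Lambda}$ of $\cO$ containing $\e$, say $f_{i_0} = \e$, so that every element of $A_\cO$ has a unique expression $\sum_i a_i \otimes f_i$ with $a_i \in A$ and only finitely many $a_i$ nonzero. Define $W := \{\, a \in A \mid a \otimes \e \in V \,\}$; this is clearly a $\k$-subspace of $A$, and $W \otimes \cO \subseteq V$ because $V$ is stable under right multiplication by $\cO$ (multiply $a \otimes \e$ on the right by $1 \otimes f$ to get $a \otimes f$). So the whole problem reduces to showing $V \subseteq W \otimes \cO$, i.e. that for every $v = \sum_i a_i \otimes f_i \in V$ each coefficient $a_i$ lies in $W$, equivalently $a_i \otimes \e \in V$.

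The key idea is to use the counit $\e \colon \cO \to \k$ together with the $\rhk$-action to ``project'' onto individual basis coefficients. Concretely, for $h \in H$ the operation $v \mapsto h \rhk v$ sends $\sum_i a_i \otimes f_i$ to $\sum_i a_i \otimes (h \rhk f_i) = \sum_i a_i \otimes (f_i)_1 \langle (f_i)_2, h\rangle$. Composing the $\cO$-component with the counit $\e$ — which is a legitimate operation since right multiplication by $1 \otimes f$ followed by another $\rhk$ stays in $V$, and one can then evaluate — one recovers $\sum_i a_i \langle f_i, h\rangle$ placed in the $\e$-slot, as follows: for any $g \in \cO$, the element $(1\otimes g)\cdot v$ is in $V$, and applying the counit in the sense of taking the coefficient structure, the map $\cO \otimes \cO \to \cO$, $g \otimes f \mapsto \e(g) f$ is not quite what is needed. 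The clean route: since $V$ is a right $\cO$-submodule of the free right $\cO$-module $A \otimes \cO$ (with $\cO$ acting on the second factor), and it is stable under $\rhk$, I apply the linear functional $\mathrm{id}_A \otimes \e \colon A \otimes \cO \to A$. For $v = \sum_i a_i \otimes f_i$ one has $(\mathrm{id}_A \otimes \e)(h \rhk v) = \sum_i a_i \langle f_i, h \rangle$. Now $h \rhk v \in V$ and, crucially, $(\mathrm{id}_A \otimes \e)$ applied to an element of $V$ lands in $W$: indeed $(\mathrm{id}_A\otimes\e)(w) \otimes \e = (1 \otimes \e)\cdot w$ when... — this needs the identity $(\mathrm{id}_A\otimes\e)(w)\otimes\e$ to be expressible via the $\cO$-module and $\rhk$ structures on $w$. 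Since $\e \in \cO$ and $\cO$ is a Hopf algebra, $(\e \rhk f) = \e\langle f,\text{--}\rangle$... I would instead argue: the composite $A \otimes \cO \xrightarrow{\rhk_{\text{by }1_H}} A \otimes \cO$ is the identity, and more usefully, for the element $v$ the family $\{\sum_i a_i\langle f_i,h\rangle : h \in H\}$ spans $\{a_i : i\}$ over $\k$ because $\cO \subseteq H^\circ$ separates points of $\cO^* \supseteq$ (the span of the $f_i$), so the pairing $\cO \times H \to \k$ is non-degenerate on the left. Hence each $a_i$ is a $\k$-linear combination of elements $(\mathrm{id}_A\otimes\e)(h\rhk v)$, and it remains only to check $(\mathrm{id}_A\otimes\e)(h \rhk v)\otimes \e \in V$ so that $a_i \in W$. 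For that, note $(\mathrm{id}_A\otimes\e)(u)\otimes\e = (1\otimes\e)\,u$ fails in general, so instead: apply the $\rhk$-action by a suitable element and use that $V$ is a right $\cO$-module — the cleanest is to observe $(\mathrm{id}_A\otimes\e)(u)\otimes\e = \sum (1\otimes \e)(u)\cdot$ after writing $u = \sum_j b_j\otimes g_j$: then $(\mathrm{id}_A\otimes\e)(u) = \sum_j b_j\e(g_j)$, while $(1\otimes\e)\cdot u$ is not a product in $A_\cO$ since $\e = 1_{\cO}$ means $(1\otimes\e)\cdot u = u$; rather the relevant fact is that $\e(g_j)(b_j\otimes\e) = b_j\otimes \e(g_j)\e$ and summing gives $(\mathrm{id}_A\otimes\e)(u)\otimes\e = \sum_j b_j\otimes(\e(g_j)\e)$, and $\e(g_j)\e \in \cO$, so if only we knew the ``evaluate the $\cO$-component at $\e$, re-insert as constant'' operation preserves $V$ — which it does, because it equals applying the counit $\e$ of the Hopf algebra $\cO$ as a character and multiplying: in fact $g \mapsto \e(g)\e$ is the composite $\cO \to \k \to \cO$, and on $V$ this is realized by first right-multiplying by nothing and then... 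I would present this last point carefully using that $\e$ is a grouplike-type element (the counit) and that $V$ is $\rhk$-stable: since $(\e \text{-component extraction})$ is the map induced on the second tensor factor by $\epsilon_\cO$ which is $\rhk$-equivariant, and $1\otimes\e$ generates the trivial subcomodule, one gets $(\mathrm{id}_A\otimes\e)(u)\otimes\e \in V$ for $u \in V$.

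The main obstacle I anticipate is exactly this last verification: cleanly showing that $(\mathrm{id}_A\otimes\e)(V)\otimes\e \subseteq V$, i.e. that the ``project the $\cO$-coordinate onto its $\e$-component'' operator maps $V$ into itself. The honest way to handle it is probably to avoid the counit projection altogether and instead argue directly with the right $\cO$-module structure: $V$ is a right $\cO$-submodule of the free right $\cO$-module $A\otimes\cO$ of rank $\dim_\k A$, so $V$ is itself of the form $W \otimes \cO$ for some subspace $W$ provided one knows $V$ has a ``basis in $A \otimes \e$'' — and $\rhk$-stability is what upgrades an arbitrary $\cO$-submodule to one of this special form. So the real engine of the proof is: \emph{(1)} every element of $V$, via the $\rhk$-action and the non-degeneracy of the pairing $\cO \times H \to \k$, has all of its $A$-coefficients (with respect to the basis $\{f_i\}$) lying in the $\k$-span of $\{(\mathrm{id}\otimes\e)(h\rhk v)\}$; \emph{(2)} each such coefficient, viewed in $A\otimes\e$, lies in $V$. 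For step (2), I would write $v = \sum_i a_i\otimes f_i$, hit it with $h\in H$ to get $h\rhk v = \sum_i a_i\otimes f_{i,1}\langle f_{i,2},h\rangle \in V$, then right-multiply inside $V$ by elements $1\otimes g$ and take $\k$-linear combinations to isolate, using that $V$ is a right $\cO$-module and $1\otimes\e \in \cO$: the upshot is $a_i\otimes\e \in V$ for each $i$, hence $a_i \in W$, hence $v \in W\otimes\cO$. Combined with $W\otimes\cO\subseteq V$ from the first paragraph, this gives $V = W\otimes\cO$ and completes the proof.
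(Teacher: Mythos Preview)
Your setup is fine: the easy direction is correct, and defining $W=\{a\in A\mid a\otimes\e\in V\}$ together with the observation $(\Id_A\otimes\e)(h\rhk v)=\sum_i a_i\langle f_i,h\rangle$ is exactly the right start. The gap is the step you yourself flag as ``the main obstacle'': you never actually prove that the counit projection preserves $V$, i.e.\ that $(\Id_A\otimes\e)(u)\otimes\e\in V$ for $u\in V$. Your attempts to realize $f\mapsto\e(f)\e$ as a combination of $\rhk$-operators and right multiplications never land, and they cannot land in the form you try: no \emph{fixed} element of the algebra generated by the operators $\rhk_h$ and right multiplications $r_g$ implements this projection, because the required right multiplier depends on the element being projected.

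What is missing is the antipode. The correct way to close the gap is to first show that $V$ is a subcomodule for $d=\Id_A\otimes\Delta_{\cO}$: writing $d(v)=\sum_k u_k\otimes c_k$ with the $c_k$ linearly independent in $\cO\subseteq H^*$, one has $h\rhk v=\sum_k u_k\langle c_k,h\rangle$, and separating the $c_k$ by elements of $H$ forces each $u_k\in V$; hence $d(V)\subseteq V\otimes\cO$. Then the Hopf-module projection $v\mapsto v_0\cdot(1\otimes\ant(v_1))$ sends $V$ into $V$ (right $\cO$-stability) and computes, on $a\otimes f$, to $a\otimes f_1\ant(f_2)=\e(f)\,a\otimes\e$. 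This is precisely the projection you wanted. The paper packages both steps by observing that the two stability hypotheses make $V$ an $\cO$-Hopf submodule of $A_{\cO}$ (with comodule structure $d$) and then invoking the Structure Theorem for Hopf modules, which gives $V=V^{\mathrm{co}\,\cO}\cdot\cO$ with $V^{\mathrm{co}\,\cO}\subseteq A\otimes\k$. Your argument, once repaired with the antipode, is an unpacking of that theorem in this special case rather than a different route.
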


\begin{proof}
Certainly, each $W \otimes \cO$ is stable under right multiplication by $\cO$
and under \eqref{E:rhk'}.
For the converse, equip $A_{\cO}$ with the ``trivial'' right $\cO$-Hopf module structure: 
the right $\cO$-module and right $\cO$-comodule structures are given by $\Id_A \otimes m_{\cO}$ and
$d = \Id_A\otimes \Delta_{\cO}$, where $m_{\cO}$ and $\Delta_{\cO}$ are the multiplication and comultiplication of
$\cO$, respectively \cite[Examples 10.3, 10.4]{mL18}.
Then $A_{\cO} \into A_{\cO} \otimes \cO \into \Hom_{\k}(H,A_{\cO})$ via $d$,
with $(d b)(h) = h \rhk b$ for $b \in A_{\cO}$.
Now let $V \subseteq A_{\cO}$ be a $\k$-subspace that is
stable under right multiplication by $\cO$ and under \eqref{E:rhk'}. Then $d V \subseteq 
\Hom_{\k}(H,V) \cap (A_{\cO} \otimes \cO) = V \otimes \cO$ and so $V$ is a  
$\cO$-Hopf submodule of $A_{\cO}$. By the Structure Theorem for Hopf modules
\cite[\S 10.1.2]{mL18}, $V$ is generated as right $\cO$-module by the 
subspace of $\cO$-coinvariants, $V^{\text{co}\,\cO} = \{ v \in V \mid \d v = v \otimes 1\}$.
Since $V^{\text{co}\,\cO} \subseteq (A_{\cO})^{\text{co}\,\cO} = A \otimes \cO^{\text{co}\,\cO} = A \otimes\k$,
it follows that $V = W \otimes \cO$ with $W = V^{\text{co}\,\cO}$. 
\end{proof}


\subsection{Coefficient Hopf algebras of locally finite actions}
\label{SS:Coeff}

\subsubsection{}
\label{SSS:Coeff1}
Assume that the action $H \acts A$ is locally finite, that is, $A = \fin A$. 
Then, since $\d \colon A \to (B,\rhk)$ is
a morphism in $\ModAlg H$ (\S\ref{SS:Conv}), the image $\d A$ is contained in 
$\fin{(B,\rhk)} = A \otimes H^\circ$ and
$A$ becomes a right $H^\circ$-comodule algebra via $\d$ \cite[Proposition 10.26]{mL18}.
Any Hopf subalgebra $\cO \subseteq H^\circ$ such that 
$\d A$ is contained in the subalgebra $A_{\cO} = A \otimes \cO \subseteq \fin{(B,\rhk)}$ will be called a 
\emph{coefficient Hopf algebra} for $H \acts A$. 
We then have the following version of \eqref{E:LocFin}, which makes $A$ a right $\cO$-comodule algebra:
\begin{equation}
\label{E:CoModAlgO}
\begin{tikzpicture}[baseline=(current  bounding  box.358),  >=latex, scale=.7]
\matrix (m) [matrix of math nodes, 
column sep=2em, text height=1.2ex, text depth=0.25ex]
{\d \colon A & A_{\cO} \\ \hspace{.2in} a & a_0 \otimes a_1 \\};
\draw[->] (m-1-1) edge (m-1-2);
\draw[|->] (m-2-1) edge (m-2-2);
\end{tikzpicture} 
\quad\text{with}\quad h.a = a_{0} \gen{a_1,h}
\hspace{.3in} (h\in H, a\in A).
\end{equation}
Any coefficient Hopf algebra for  $H \acts A$ will also
serve as a coefficient Hopf algebra for the $H$-action on quotients of $A$ modulo $H$-ideals
and on $H$-module subalgebras of $A$, and the intersection of all coefficient Hopf subalgebras for $H \acts A$
is the unique smallest one. 

\begin{lem} 
\label{L:Coeff'}
Let $H \acts A$ be locally finite and let $\cO \subseteq H^\circ$ be a coefficient
Hopf algebra. Then:
\begin{enumerate}
\item
All subspaces of $A_{\cO}$ of the form
$W \otimes \cO$, where $W \subseteq A$ is an $H$-stable $\k$-subspace, are stable under the maps 
$\Phi, \Psi$ (\S{\rm\ref{SS:Conv}}).
\item
If $I$ is an ideal of $A$, then
$I\byH = \Psi(I \otimes \cO) \cap A$.
\end{enumerate}
\end{lem}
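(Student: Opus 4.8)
\emph{Strategy for (a).} The plan is to compute $\Phi$ and $\Psi$ on pure tensors of $A_\cO = A \otimes \cO$ explicitly, using the comodule structure map $\d$ of \eqref{E:CoModAlgO}. First I would record the identity $\Phi(\iota a) = \d a$ — both sides send $h \in H$ to $h.a$ — equivalently $\Psi(\d a) = \iota a$. Writing $\d a = a_0 \otimes a_1 \in A \otimes \cO$ and using $a \otimes f = (\iota a)(1 \otimes f)$ together with the right $H^*$-linearity of $\Phi$ and $\Psi$ recorded in \S\ref{SS:Conv}, this gives $\Phi(a \otimes f) = (\d a)(1 \otimes f) = a_0 \otimes a_1 f$ and, by the parallel computation $\Psi(\iota a)(h) = \ant(h).a = a_0 \gen{\ant^\circ(a_1),h}$, also $\Psi(a \otimes f) = a_0 \otimes \ant^\circ(a_1)\, f$; here $\ant^\circ$ denotes the antipode of $H^\circ$ (the transpose of $\ant$) and the products are those of $A \otimes H^\circ$. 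The one point needing an argument is that an $H$-stable subspace $W \subseteq A$ satisfies $\d W \subseteq W \otimes \cO$: the coefficients $w_0$ of $\d w$ for $w \in W$ can be chosen in the $\k$-span of $H.w$, which lies in $W$, so $\d W \subseteq W \otimes H^\circ$; combined with $\d W \subseteq \d A \subseteq A \otimes \cO$ and the elementary identity $(U \otimes V') \cap (U' \otimes V) = U' \otimes V'$ for nested subspace pairs $U' \subseteq U$, $V' \subseteq V$, this yields $\d W \subseteq (W \otimes H^\circ) \cap (A \otimes \cO) = W \otimes \cO$. Granting it, for $a \in W$ and $f \in \cO$ we have $a_0 \in W$ and $a_1 f, \ant^\circ(a_1) f \in \cO$ (as $\cO$ is a Hopf subalgebra, hence closed under its antipode), so $\Phi(a \otimes f), \Psi(a \otimes f) \in W \otimes \cO$; since such tensors span $W \otimes \cO$, part (a) follows — and, $\Phi$ and $\Psi$ being mutually inverse, these inclusions are in fact equalities.

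\emph{Strategy for (b).} Here I would simply unwind both sides, using $\Psi^{-1} = \Phi$ and the identity $\Phi(\iota a) = \d a$ above. For $a \in A$ one has $a \in I\byH$ iff $h.a \in I$ for all $h$, i.e. iff $\Im(\d a) \subseteq I$; since $\d a$ has finite rank (as $A = \fin A$) and lies in $A \otimes \cO$, this is equivalent to $\d a \in I \otimes \cO$, because a finite-rank map $H \to A$ with image in $I$ lies in $I \otimes H^*$ and $(I \otimes H^*) \cap (A \otimes \cO) = I \otimes \cO$ by the same intersection fact. On the other hand, $\Psi$ being bijective with inverse $\Phi$, we have $\iota a \in \Psi(I \otimes \cO)$ iff $\Phi(\iota a) = \d a \in I \otimes \cO$. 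Combining the two equivalences, $\iota a \in \Psi(I \otimes \cO) \cap \iota A$ iff $a \in I\byH$, which is precisely the claimed equality under the identification $A = \iota A$.

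\emph{Main obstacle.} I do not anticipate a genuine obstacle: the proof amounts to organizing the definitions. The only steps carrying content are the inclusion $\d W \subseteq W \otimes \cO$ for $H$-stable $W$ (used in (a)) and the two elementary linear-algebra facts invoked throughout — that $(U \otimes V') \cap (U' \otimes V) = U' \otimes V'$ for nested pairs of subspaces, and that a finite-rank map into $A$ with image in a subspace $I$ lies in $I \otimes H^*$. Everything else is bookkeeping with $\Phi$, $\Psi$, $\iota$, $\d$ and the intertwining and linearity relations already established in \S\ref{SS:Conv}.
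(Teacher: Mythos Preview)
Your proposal is correct and follows essentially the same route as the paper: both parts hinge on the identity $\Phi\circ\iota=\d$ from \eqref{E:e*d}, right $H^*$-linearity of $\Phi,\Psi$, and the inclusion $\d W\subseteq W\otimes\cO$ for $H$-stable $W$, yielding the explicit formulas $\Phi(a\otimes f)=a_0\otimes a_1 f$ and $\Psi(a\otimes f)=a_0\otimes\ant^*(a_1)f$. Your write-up is actually more careful than the paper's, which simply asserts $\d W\subseteq W\otimes\cO$ without the intersection argument you supply.
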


\begin{proof}
(a)
If $W \subseteq A$ is $H$-stable, then $\d W \subseteq W \otimes \cO$. Thus,
for any $a \in W$ and $f \in \cO$, 
right $\cO$-linearity of $\Phi$ and  \eqref{E:e*d} now
give $\Phi(a\otimes f) =\d(a)f = a_0 \otimes a_1f \in W \otimes \cO$. Similarly, $\Psi(a \otimes f) = 
a_0 \otimes \ant^*(a_1)f \in W \otimes \cO$\,,
proving stability of $W \otimes \cO$ under $\Phi$ and $\Psi$. 

(b)
By \eqref{E:IbyH}, $I\byH = \d^{-1}(\Hom_\k(H,I)) = \d^{-1}(I  \otimes \cO)$. 
Putting $I' = \Psi(I \otimes \cO)$ and using the identity $\Phi \circ \iota = \d$ from \eqref{E:e*d},
we obtain $I\byH =  \d^{-1}(I  \otimes \cO) = (\iota)^{-1}(I') = I' \cap A$.
\end{proof}

\subsubsection{}
\label{SSS:Coeff2}
The action $H \acts A$ is certainly locally finite if the $H$-module algebra
$A$ under consideration is generated as $\k$-algebra by a finite-dimensional
$H$-stable subspace $V \subseteq A$. Assume this to be the case and
let $\rho \colon H \to \End_\k(V)$ denote 
the algebra map given by the operation of $H$ on $V$. 
Fixing a $\k$-basis $(v_i)_1^n$ of $V$ and 
letting  $(v_i^*)$ denote the dual basis of $V^*$, we obtain the basis $(v_i \otimes v_j^*)$ of 
$\End_\k(V) \cong V \otimes V^*$ and linear forms $\rho_{i,j}$ such that
\begin{equation}
\label{E:CoeffV}
\rho h = \sum_{i,j} v_i \otimes v_j^* \gen{\rho_{i,j},h} \qquad (h \in H).
\end{equation}
The isomorphism $\End_\k(V) \cong \Mat_n(\k)$ given by our choice of basis for $V$
allows us to write $\rho = (\rho_{i,j}) \colon H \to \Mat_n(\k)$: the scalar $\gen{\rho_{i,j},h} \in \k$
is the $(i,j)$-entry of the matrix $\rho h$. Thus,  $\rho_{i,j} \in H^\circ$, because 
$\rho_{i,j}$ vanishes on the cofinite ideal $\Ker\rho$, and
$\Delta\rho_{i,j} = \sum_k \rho_{i,k} \otimes \rho_{k,j}$.
On the generating subspace $V \subseteq A$, the algebra map \eqref{E:CoModAlgO} takes the form
\begin{equation}
\label{E:CoModAlgO'}
\d v = \sum_{i,j} v_i\gen{v_j^*,v} \otimes \rho_{i,j} \qquad (v \in V)
\end{equation}
and we may take $\cO$ to be the Hopf subalgebra of $H^\circ$ that is generated by the 
matrix coefficient functions $\rho_{i,j}$. 
If $H$ is involutory, then $\cO$ is the $\k$-subalgebra of $H^\circ$ that is generated
by all $\rho_{i,j}$ and $\ant^*\rho_{i,j}$. 
We will call $\cO$ the \emph{coefficient Hopf algebra} of the 
representation $V \in \Rep H$; a more general situation is discussed in \cite[Exercise 9.2.3]{mL18}.


\section{The cocommutative case} 
\label{S:Cocommutative}


\subsection{The $\Dot$-action}
\label{SS:Act2}

\subsubsection{}
\label{SSS:centralizing}

We recall some general ring-theoretic material that will be tacitly used 
in the next proposition and throughout the remainder of this article.
A ring homomorphism $f \colon R \to S$ is called \emph{centralizing} if the ring $S$ is generated
by the image $fR$ and its centralizer in $S$. If $f$ makes $S$ a free $R$-module having a basis 
that centralizes $fR$, then $f$ is called \emph{free centralizing}. 
Any centralizing homomorphism $f$ restricts to a map of centers, $\cen R \to \cen S$; 
the assignment $I \mapsto (fI)S$ sends (two-sided) ideals of $R$ to ideals of $S$; and 
$P \mapsto f^{-1}P$ gives a well-defined map $\Spec S \to \Spec R$ \cite[\S1.5]{mL08}.

\subsubsection{}

The proposition below shows that, for $H$ cocommutative,
we may view $B$ and various subalgebras of $B$ as $H$-module algebras with
action \eqref{E:dot}, which we will refer to as the $\Dot$-action, rather than the 
$\rhk$-action \eqref{E:rhk}. When using the latter $H$-action, we
will continue write $(B,\rhk)$; otherwise, the $\Dot$-action is assumed. 

\begin{prop}
\label{P:DotInv}
Let $H$ be cocommutative and let $\cO \subseteq H^\circ$ be a coefficient Hopf algebra
for the action $H \acts \fin A$\,. Then:
\begin{enumerate}
\item
$\Phi, \Psi$ are algebra automorphisms of $B$ stabilizing the subalgebra $\finO A:= \fin A \otimes \cO$.
\item
$B \in \ModAlg H$ with the $\Dot$-action and $B^H = \Psi A$.
\item
$A_{\cO}$ is an $H$-module subalgebra of $B$, with $\fin{(A_{\cO})} = \finO A$ and
$(A_{\cO})^H = \Psi(\fin A)$. The inclusion $(A_{\cO})^H \subseteq \fin{(A_{\cO})}$
is free centralizing.
\item
The following maps are bijections that are inverse to each other: 
\[
\begin{tikzpicture}[baseline=(current  bounding  box.359), >=latex, scale=.7,
bij/.style={above,sloped,inner sep=0.5pt}]
\matrix (m) [matrix of math nodes, 
column sep=2em, text height=1.5ex, text depth=0.25ex]
{\{ \text{\rm ideals of } \fin A \} & \{ \text{\rm $H$-ideals of } \finO A\} & \{ \text{\rm ideals of } (A_{\cO})^H \}  \\[.5em] 
I & I':= \Psi(I \otimes \cO) & I' \cap (A_{\cO})^H \\[-.2em]
\Phi(J'') \cap \fin A &  J'':=J\finO A &  J  \\  };
\draw[<->] (m-1-1) edge node[bij] {$\sim$} (m-1-2);
\draw[<->] (m-1-2) edge node[bij] {$\sim$} (m-1-3);
\draw[|->] (m-3-3) edge (m-3-2);
\draw[|->] (m-3-2) edge (m-3-1);
\draw[|->] (m-2-1) edge (m-2-2);
\draw[|->] (m-2-2) edge (m-2-3);
\end{tikzpicture}
\]
\end{enumerate}
\end{prop}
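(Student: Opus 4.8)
The plan is to reduce everything in (b)--(d) to transport of structure along the mutually inverse $\k$-linear automorphisms $\Phi,\Psi$ of $B$, which by \eqref{E:Inter} intertwine the $\rhk$-action with the $\Dot$-action; the facts already recorded for $(B,\rhk)$ (Lemmas~\ref{L:B_0}, \ref{L:Stability}, \ref{L:Coeff'}) will then pull across. So the substantive point is part~(a): that $\Phi$ --- hence $\Psi=\Phi^{-1}$ --- is multiplicative. I would prove this by the Sweedler computation: writing $h_1\otimes h_2\otimes h_3\otimes h_4$ for the threefold coproduct of $h$ and expanding $\Phi(bc)(h)$ by coassociativity and then the measuring axiom $g.(xy)=(g_1.x)(g_2.y)$ gives $(h_1.b(h_3))(h_2.c(h_4))$, while the same steps give $(\Phi b\cdot\Phi c)(h)=(h_1.b(h_2))(h_3.c(h_4))$. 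Both are the value on the threefold coproduct of $h$ of one linear map $H^{\otimes4}\to A$, the two inputs differing only by the transposition of the middle two tensor legs, so they agree because cocommutativity makes the threefold coproduct invariant under that transposition (indeed under every permutation of its four legs). Since $\Phi,\Psi$ are already known to be mutually inverse bijections fixing $1_B$, it follows that they are algebra automorphisms of $B$; that they stabilize $\finO A=\fin A\otimes\cO$ is immediate from Lemma~\ref{L:Coeff'}(a) applied to the $H$-module algebra $\fin A$, with $W=\fin A$, and needs no cocommutativity. I expect this permutation identity for the iterated coproduct to be the main obstacle; after it, the rest is formal.

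For~(b): $\Phi$ being an algebra isomorphism with $\Phi(h\Dot b)=h\rhk\Phi b$, the $\Dot$-action is $h\Dot b=\Psi\big(h\rhk(\Phi b)\big)$, and conjugating the $H$-module-algebra axioms of $(B,\rhk)$ through $\Phi$ --- using multiplicativity of $\Phi$ and $\Psi$ for the measuring identity and $\Phi 1_B=1_B$ for the unit axiom --- shows $(B,\Dot)\in\ModAlg H$. Since $b$ is $\Dot$-invariant iff $\Phi b$ is $\rhk$-invariant and $(B,\rhk)^H=\iota A$ by Lemma~\ref{L:B_0}(a), we get $B^H=\Psi(\iota A)=\Psi A$. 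For~(c): by \eqref{E:dot'} the $\Dot$-action on $A_\cO=A\otimes\cO$ is precisely the tensor-product action on $A\otimes(\cO,\rhk)$, and $(\cO,\rhk)$ is locally finite. Using the standard identity $\fin{(M\otimes N)}=\fin M\otimes N$ for locally finite $N$ --- which, if spelled out, comes from the $H$-linearity of the evaluation and coevaluation maps of a finite-dimensional module together with a colimit over finite-dimensional submodules of $N$ --- one gets $\fin{(A_\cO)}=\fin A\otimes\cO=\finO A$. As $H$-invariants are automatically locally finite, $(A_\cO)^H=(\finO A)^H$; transporting through $\Phi$ and using \eqref{E:rhk'} with $(\cO,\rhk)^H=\k\e$ yields $(\finO A)^H=\Psi(\iota\,\fin A)$, i.e.\ $(A_\cO)^H=\Psi(\fin A)$. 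Finally $\iota(\fin A)=\fin A\otimes 1\hookrightarrow\fin A\otimes\cO=\finO A$ is visibly free centralizing, with centralizing basis $(1\otimes e_\lambda)$ for any $\k$-basis $(e_\lambda)$ of $\cO$; applying the algebra automorphism $\Psi$ turns this into a free centralizing inclusion $(A_\cO)^H\hookrightarrow\fin{(A_\cO)}$.

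For~(d), the two bijections: the first, between ideals of $\fin A$ and $H$-ideals of $\finO A$, comes by transporting through the algebra automorphism $\Phi$ of $\finO A$, which takes $H$-ideals of $(\finO A,\Dot)$ to $H$-ideals of $(\finO A,\rhk)$; any such ideal $V$ is stable under right multiplication by $1\otimes\cO$ and under $\rhk$, so $V=W\otimes\cO$ for a unique $\k$-subspace $W\subseteq\fin A$ by Lemma~\ref{L:Stability} (with $\fin A$ in place of $A$), and $W\otimes\cO$ is an ideal of $\fin A\otimes\cO$ iff $W$ is an ideal of $\fin A$. Under the identification $\fin A\otimes 1=\iota(\fin A)=\fin A$ this bijection reads $I\mapsto\Psi(I\otimes\cO)$ with inverse $J''\mapsto\Phi(J'')\cap\fin A$. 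The second bijection, between $H$-ideals of $\finO A$ and ideals of $(A_\cO)^H$, is the correspondence for the free centralizing extension $(A_\cO)^H\subseteq\finO A$ of~(c): for an ideal $J$ of $(A_\cO)^H$ the generated ideal $J\finO A$ is an $H$-ideal because $(A_\cO)^H$ consists of $H$-invariants, and every $H$-ideal is recovered as $(J''\cap(A_\cO)^H)\finO A$ --- cleanest via the first bijection, which writes $J''=\Psi(I\otimes\cO)$ and $(A_\cO)^H=\Psi(\iota\,\fin A)$, so that $J''\cap(A_\cO)^H=\Psi(\iota I)$ and $\Psi(\iota I)\finO A=\Psi(I\otimes\cO)=J''$ since $I\otimes\cO$ is the ideal of $\finO A$ generated by $\iota I$. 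Composing the two bijections gives the displayed triangle, with the maps exactly as stated. Apart from the cocommutativity identity of~(a) and the tensor-product computation in~(c), the whole argument is transport along $\Phi,\Psi$ together with the cited lemmas.
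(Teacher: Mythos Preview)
Your proposal is correct and follows essentially the same approach as the paper: multiplicativity of $\Phi$ via the cocommutativity-based Sweedler computation, then transport of structure along $\Phi,\Psi$ combined with Lemmas~\ref{L:B_0}, \ref{L:Stability}, \ref{L:Coeff'} and the cited result on $\fin{(M\otimes N)}$. The only cosmetic difference is in part~(c), where you compute $(A_\cO)^H$ as $\Psi\big((\finO A,\rhk)^H\big)=\Psi\big(\fin A\otimes(\cO,\rhk)^H\big)=\Psi(\iota\,\fin A)$, while the paper obtains it as $B^H\cap\fin{(B,\rhk)}$ via the explicit formula $(\Psi a)(h)=\ant(h).a$; both routes are immediate consequences of the same transport principle.
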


\begin{proof}
(a)
For $b,b' \in B$ and $h \in H$, one computes using cocommutativity,
\[
\begin{aligned}
\Phi(bb')(h) &= h_1.(b(h_2)b'(h_3)) = (h_1.b(h_3))(h_2.b'(h_4)) \\
&= (h_1.b(h_2))(h_3.b'(h_4)) = (\Phi(b)\Phi(b'))(h).
\end{aligned}
\]
Thus $\Phi$ is multiplicative and hence it is
an algebra automorphisms of $B$; likewise for $\Psi = \Phi^{-1}$. Stability of 
the subalgebra $\finO A$ under $\Phi$ and $\Psi$
follows from Lemma~\ref{L:Coeff'}(a) for $\fin A$.

(b)
Since $\Psi$ is an algebra automorphism of $B$,
the intertwining formula \eqref{E:Inter} together with the fact that 
$(B,\rhk) \in \ModAlg H$ implies that 
$B = (B, \Dot) \in \ModAlg H$ as well. Also by \eqref{E:Inter}, $\Psi$ yields
an algebra isomorphism $(B,\rhk)^H \cong B^H$ and so
Lemma~\ref{L:B_0}(a) gives the isomorphism
$\Psi\circ\iota \colon A \iso (B,\rhk)^H \iso B^H$. Identifying $A$ with $\iota A = A \otimes \e$ as usual, 
we obtain the claimed equality $B^H = \Psi A$. 

(c)
The subalgebra $A_\cO \subseteq B$ is stable under the $\Dot$-action by \eqref{E:dot'} and
so it is an $H$-module subalgebra of $B$. The equality $\fin{(A_{\cO})} = \finO A$ 
follows from \cite[Corollary 6]{sKmLbNrY19}, because the $H$-action $\rhk$ on $\cO$ is
locally finite. Since $\ant$ is an involution of $H$, the formula 
$(\Psi a)(h) = \ant(h_1).a\gen{\e,h_2} =  \ant(h).a$ $(a \in A, h \in H)$
shows that $\Psi a$ 
vanishes on some cofinite ideal of $H$ if and only if $a \in \fin A$. In that case, 
$\Psi a \in \finO A \subseteq A_{\cO}$ by (a). Thus, in view of part (b) and Lemma~\ref{L:B_0}(b),
$\Psi(\fin A) =  B^H \cap \fin{(B,\rhk)} = (A_{\cO})^H$ as asserted.
Finally, since $\fin{(A_{\cO})} = \finO A = \Psi(\finO A)$ is generated by the commuting subalgebras 
$\Psi(\fin A) = (A_{\cO})^H$ and $\Psi\cO$, with $\Psi\cO$ providing
an $(A_{\cO})^H$-basis of $\fin{(A_{\cO})}$, the inclusion 
$(A_{\cO})^H \subseteq \fin{(A_{\cO})}$ is free centralizing.

(d)
Let $J$ be an ideal of $(A_{\cO})^H$. Since the inclusion 
$(A_{\cO})^H \subseteq \fin{(A_{\cO})} = \finO A$ is centralizing by (c), $J \finO A$ is an
ideal of $\finO A$, clearly an $H$-ideal. Further,
$J \finO A \cap (A_{\cO})^H = J$ by freeness of $\finO A$ over $(A_{\cO})^H$.
Now let $L$ be any $H$-ideal of $\finO A$. Then $\Phi L$ is an ideal of $\finO A$ that is
stable for the $\rhk$-action by \eqref{E:Inter}. Thus, Lemma~\ref{L:Stability} implies that
$\Phi L = I \otimes \cO$ for some ideal $I$ of $\fin A$.
Therefore, $L = \Psi(I \otimes \cO) = \Psi I\,\Psi \cO = (\Psi I) \finO A$
with $\Psi I$ being an ideal of  $(A_{\cO})^H = \Psi(\fin A)$. This proves that extension and contraction
give inverse bijections between the sets
of ideals of $(A_{\cO})^H$ and $H$-ideals of $\finO A$. The bijection between the sets of ideals of
$(A_{\cO})^H$ and of $\fin A$ is a consequence of the equality $\Psi(\fin A) =  (A_{\cO})^H$.
\end{proof}


\subsection{Integral actions}
\label{SS:IntAct}

\subsubsection{}
\label{SSS:IntAct1}

A locally finite action $H \acts A$ will be called \emph{integral} if it has a coefficient Hopf algebra that is
a commutative integral domain. Even though $H$ need not a priori be cocommutative,
any integral $H$-action factors through a cocommutative Hopf quotient of $H$.

\begin{prop}
\label{P:IntPrime}
Let $H \acts A$ be integral and $\k$ algebraically closed. Then $P\byH$ is prime
for every $P \in \Spec A$. Furthermore, $\HSpec A$ is the set of all prime $H$-ideals of $A$.
\end{prop}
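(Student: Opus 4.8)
The plan is to use the structure established in Proposition~\ref{P:DotInv} to transfer primeness questions about $A$ to a commutative setting where they become tractable. Fix a coefficient Hopf algebra $\cO \subseteq H^\circ$ that is a commutative integral domain; this exists by hypothesis. We may as well assume $A$ is locally finite (replacing $A$ by $\fin A$ does not change which ideals are $H$-ideals nor the $H$-core operation on ideals that are already known to be $H$-stable, and in any event $P\byH \subseteq \fin A$). The key object is the free centralizing inclusion $(A_{\cO})^H \subseteq \finO A = \fin{(A_{\cO})}$ from Proposition~\ref{P:DotInv}(c), together with the lattice isomorphisms of Proposition~\ref{P:DotInv}(d) identifying ideals of $A$, $H$-ideals of $\finO A$, and ideals of $(A_{\cO})^H$.

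First I would show that $\Psi$ sends $P\byH$ to a prime $H$-ideal of $\finO A$. By Lemma~\ref{L:Coeff'}(b), $P\byH = \Psi(P \otimes \cO) \cap A$, and under the correspondence of Proposition~\ref{P:DotInv}(d) the ideal $P\byH$ of $A$ matches the $H$-ideal $\Psi(P \otimes \cO)$ of $\finO A$ and the ideal $\Psi(P\byH)$ of $(A_{\cO})^H$. So it suffices to analyze primeness in the commutative domain $A_{\cO}$, or rather in $\finO A$, which is a free module over $(A_{\cO})^H$ with a centralizing basis. Now $P \otimes \cO$ is a prime ideal of $A \otimes \cO = A_{\cO}$: since $\cO$ is a domain and $A/P$ is prime, $(A/P) \otimes \cO$ injects into $(A/P) \otimes \Fract(\cO)$, which is a prime ring (a domain tensored with a prime ring over a field, or more simply: localization of $A/P$ at the central domain $\cO$ is prime). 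Hence $\Psi(P \otimes \cO)$ is a prime ideal of $\finO A$, and consequently $\Psi(P\byH) = \Psi(P\otimes\cO) \cap (A_{\cO})^H$ is a prime ideal of the commutative ring $(A_{\cO})^H$. Pulling back along the free centralizing extension preserves primeness in the other direction too: since $\finO A = \Psi(P\byH)\text{-extension}$ sits over a prime of $(A_{\cO})^H$ via a free centralizing map, one checks $\Phi$ applied to this, intersected with $\fin A$, is prime --- i.e. $P\byH$ is prime. The mechanism is exactly the standard fact that for a free centralizing (indeed for any centralizing) extension $R \subseteq S$ with $S$ free over $R$, contraction $\Spec S \to \Spec R$ and the extension map interact well with primeness of $H$-primes; this is precisely the content cited from \cite[\S1.5]{mL08} and the lattice bijection of Proposition~\ref{P:DotInv}(d).

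For the second assertion, I would argue that every $I \in \HSpec A$ arises as $P\byH$ for some $P \in \Spec A$, hence is prime by the first part; conversely every prime $H$-ideal is trivially $H$-prime. Given an $H$-prime $I$, one knows (a general, easy fact, true without hypotheses) that $I = I\byH$ and that $H$-primes of $A$ correspond under Proposition~\ref{P:DotInv}(d) to $H$-primes of $\finO A$ and thence to primes of the commutative ring $(A_{\cO})^H$ --- because in a commutative ring the $H$-prime condition on $(A_{\cO})^H$ (trivial $H$-action, $H$ acting through $\k$) is just the prime condition. Since $(A_{\cO})^H$ is commutative, such an ideal is genuinely prime, and transporting back through the free centralizing extension and $\Phi$ shows $I$ is prime in $A$. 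Alternatively, and more cleanly: take any prime $P$ of $A$ minimal over $I$; then $P\byH \supseteq I$ is an $H$-ideal contained in $P$, it is prime by the first part, and an $H$-prime cannot properly contain a prime $H$-ideal without contradicting minimality --- so $P\byH = I$.

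The main obstacle I anticipate is the passage between primeness in $A$ and primeness of the corresponding $H$-ideal of $\finO A$ (and its contraction to $(A_{\cO})^H$): one must verify carefully that the lattice bijections of Proposition~\ref{P:DotInv}(d) respect the prime property in both directions, which rests on (i) $\cO$ being a domain so that $P \otimes \cO$ stays prime in the \emph{noncommutative} ring $A_{\cO}$ --- here one should invoke that extension along the central regular multiplicative set (or the domain $\cO$) of a prime ring is prime --- and (ii) the standard behavior of primes under free centralizing extensions, so that $J$ is prime in $(A_{\cO})^H$ iff $J\finO A$ is $H$-prime (equivalently prime) in $\finO A$. Granting those two points, the proof is a short assembly of Lemma~\ref{L:Coeff'}(b) and Proposition~\ref{P:DotInv}.
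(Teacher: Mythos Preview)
Your argument for the first assertion is essentially the paper's: show $P\otimes\cO\in\Spec A_\cO$, transport by the algebra automorphism $\Psi$, and contract along a centralizing inclusion. The paper contracts directly along $\iota\colon A\hookrightarrow A_\cO$, whereas you detour through $(A_\cO)^H$. That detour works because $\Psi$ restricts to an algebra isomorphism $A\cong (A_\cO)^H$, but note that $(A_\cO)^H$ is \emph{not} commutative in general---it is isomorphic to $A$. Your reasoning survives since what you actually use is that $H$ acts trivially on $(A_\cO)^H$, not commutativity. One further wrinkle: the step ``$(A/P)\otimes\cO$ injects into $(A/P)\otimes\Fract\cO$, which is prime'' needs both that $\k$ is algebraically closed (so that a prime $\k$-algebra tensored with a field extension stays prime) and that the injection is a central localization by regular elements (so primeness descends); a bare injection into a prime ring does not suffice.

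For the second assertion, both of your approaches have genuine gaps. In approach (a), the bijection of Proposition~\ref{P:DotInv}(d) matches \emph{all} ideals of $A$ with $H$-ideals of $A_\cO$; unwinding, an $H$-ideal $I'=\Psi(I\otimes\cO)$ of $A_\cO$ is $H$-prime precisely when $I$ is \emph{prime} in $A$ (because the $H$-ideals $J',K'$ of $A_\cO$ arise from arbitrary ideals $J,K$ of $A$, not just $H$-ideals). So ``$H$-primes of $A$ correspond to $H$-primes of $A_\cO$'' is false, and the argument is circular. In approach (b), you correctly deduce that any minimal prime $P$ over $I$ satisfies $P=P\byH$, but the sentence ``an $H$-prime cannot properly contain a prime $H$-ideal without contradicting minimality'' does not yield $P\byH=I$: minimality only gives $P\byH=P$, and nothing yet forces $I=P$. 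The paper instead invokes the general fact (for locally finite actions) that the map $\Spec A\to\HSpec A$, $P\mapsto P\byH$, is surjective; the proof of that fact uses Zorn's lemma to choose $P$ maximal with $P\byH=I$ (local finiteness makes unions of chains behave), and then checks $P$ is prime using that products of $H$-ideals are $H$-ideals.
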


\begin{proof}
Let $\cO \subseteq H^\circ$ be a coefficient Hopf algebra of the action $H \acts A$ that is an integral domain,
and assume that $H$ is cocommutative, as we may. Then $\Psi$ 
is an algebra automorphism of $B$ that restricts to
an automorphism of the subalgebra $A_{\cO}$ (Proposition~\ref{P:DotInv}). For any 
$P \in \Spec A$, the ideal $P \otimes \cO$ of $A_{\cO}$ is prime \cite[Lemma 11.19]{mL18},
and hence 
\begin{equation}
\label{E:IntPrime}
P' := \Psi(P \otimes \cO) \in \Spec A_{\cO}.
\end{equation}
By Lemma~\ref{L:Coeff'}(b), $P\byH = P' \cap A$, which is a prime ideal of $A$,
because $A \into A_{\cO}$
is centralizing. The final assertion also follows, because
the map $\Spec A \to \HSpec A$\,, $P \mapsto P\byH$, is surjective for any locally
finite action $H \acts A$ \cite[Exercise 10.4.4]{mL18}.
\end{proof}

\subsubsection{}
\label{SSS:IntAct2}
Returning to the situation considered in \S\ref{SSS:Coeff2}, assume that 
$A$ is affine, generated by an
$H$-stable subspace $V \subseteq A$ with $n = \dim_\k V< \infty$. We use our earlier notation, but we 
now also assume that $H$ is cocommutative and so involutory.
The $\k$-subalgebra $\cO \subseteq H^\circ$ that is generated
by the functions $\rho_{i,j}$ and $\ant^*\rho_{i,j}$ is thus a coefficient Hopf algebra for
$H \acts A$.

\begin{example}[Group algebras]
\label{EX:IntActkG}
Let $H = \k G$ be a group algebra. Then $H^*$ is the algebra $\k^G$ of all functions $G \to \k$ with pointwise
addition and multiplication. The subalgebra $H^\circ$ is commonly 
referred to as the algebra of representative functions on $G$ 
and denoted by $R_\k(G)$ (e.g., \cite[Chapter 1]{gH81}). 
For any $g \in G$, the determinant $\det \rho g$ is nonzero
and $\gen{\ant^*\rho_{i,j},g} = \gen{\rho_{i,j},g^{-1}} = \tfrac{1}{\det \rho g} \gen{c_{j,i},g}$,
where $\gen{c_{j,i},g}$ denotes the $(j,i)$-cofactor of the matrix $\rho g \in \GL_n(\k)$, a polynomial in the
entries of $\rho g$. Thus, $c_{j,i} \in R:=\k[\rho_{i,j} \mid i,j = 1,\dots,n]$ and
$\ant^*\rho_{i,j} = \tfrac{c_{j,i}}{\det \rho} \in R[(\det \rho)^{-1}]$, 
the subalgebra of $H^\circ$ that is generated by the 
functions $\rho_{i,j}$ and $(\det\rho)^{-1} = \ant^*\det\rho$. We obtain 
\[
\cO = R[(\det \rho)^{-1}] = \k[(\det \rho)^{-1}, \rho_{i,j} \mid i,j = 1,\dots,n].
\] 
Now let $\k$ be algebraically closed.
The group $\GL_n(\k)$ is affine algebraic, with associated Hopf algebra $\cO(\GL_n)$ as in \cite[Example 9.19]{mL18}. 
The Hopf algebra $\cO$ is the image of $\cO(\GL_n)$ under restriction 
to $\rho G \le \GL_n(\k)$. Finally, $\cO$ is a domain if and only if $\rho G$ is an irreducible subset
of $\GL_n(\k)$ in the Zariski topology or, equivalently, the closure $\overline{\rho G}$ is a connected affine
algebraic group.
\end{example}

\begin{example}[Enveloping algebras]
\label{EX:IntActUg}
Let $\fg$ be a Lie $\k$-algebra and $H = U\fg$ its enveloping algebra.
If $\ch\k = 0$, then $H^*$ is a (commutative) domain; 
see \cite[Chap.~II, \S 1, n$^{\rm o}$ 5]{nB72} or Section~\ref{SS:Env} below.
Therefore, the subalgebra $H^\circ$ and all coefficient Hopf algebras $\cO$ are 
integral domains as well in characteristic $0$.
The situation is different for $\ch\k = p > 0$.
Indeed, in this case, $\gen{f^p,\fg} = 0$ for any $f \in H^*$. If $f \in H^\circ$ is grouplike 
or, equivalently, an algebra map, then so is $f^p$ 
and hence $\Ker f^p$ is the ideal of $H$ that is generated by $\fg$.
Therefore, $f^p = \e$ and so $(f - \e)^p = 0$. If $\fg \neq [\fg,\fg]$ then we may choose $f \neq \e$, giving
a  nonzero nilpotent element of $H^\circ$.
\end{example}


\subsection{The symmetric ring of quotients and the extended center}
\label{SS:C(R)}

\subsubsection{}

We briefly recall some background material
on the symmetric ring of quotients $\Q R$ of an arbitrary ring $R$ and 
its center, $\cC R:= \cen(\Q R)$, the so-called \emph{extended center} of $R$. See
\cite[Appendix E]{mL18} for details. The ring $R$ is a subring of $\Q R$ and $\cC R$ 
coincides with the centralizer of $R$ in $\Q R$. In particular, $\cen R \subseteq \cC R$.
If $\cen R = \cC R$, then $R$ is called \emph{centrally closed}. In general, we may consider 
the following subring of $\Q R$, possibly strictly larger than $R$:
\[
\til R:= R(\cC R) \subseteq \Q R.
\]
If $R$ is semiprime, then $\til R$ is a centrally closed ring
\cite[Theorem 3.2]{wBwM79}, called the \emph{central closure} of $R$.
If $R$ is a $\k$-algebra, then so is $\til{R}$, because $\cen R \subseteq \cC R = \cen \til{R}$.

\subsubsection{}
\label{SSS:QA}

The next lemma concerns the extension of a given action $H\acts A$ to an action on $\Q A$. 
Part (a), which summarizes known facts, shows that this is
always possible, in a unique way, in the situation that we are interested in. Indeed, any cocommutative 
Hopf algebra over an algebraically closed field is pointed \cite[Lemma 8.0.1]{mS69}. However, local
finiteness of an action may be lost in the process. For instance, consider the action of the
group $\Units \k$ on the polynomial algebra $A = \k[x]$ that is determined by $\lambda.x = \lambda x$
for $\lambda \in \Units\k$ and the extended action on 
the field $\Q A = \Fract A = k(x)$. If $\k$ is infinite, then $\fin{\k(x)} = \k[x^{\pm 1}]$. Part (b) of the lemma focuses on
the action $H \acts \fin{(\Q A)}$ rather than $H \acts \Q A$.

\begin{lem}
\label{L:C(R)}
Let $H$ be pointed cocommutative. Then:
\begin{enumerate}
\item
The $H$-action on $A$ extends uniquely to an action $H \acts \Q A$ and this action 
stabilizes the subalgebras $\til A = A(\cC A)$ and $\cC A$.
\item
If the action $H \acts A$ is locally finite with coefficient Hopf algebra $\cO$, then the extended action 
$H \acts \fin{(\Q A)}$ also has coefficient Hopf algebra $\cO$.
\end{enumerate}
\end{lem}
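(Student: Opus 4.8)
\medskip

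The plan is to treat the two parts separately, with part (a) being essentially a citation-assembly job and part (b) being where the real work lies.

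\textit{Part (a).} Since $H$ is pointed cocommutative, it is a smash product of a group algebra of grouplikes with a restricted-or-ordinary enveloping algebra; more to the point, $H$ is generated as an algebra by its grouplike elements $G(H)$ and skew-primitive elements. For a grouplike $g$, the action of $g$ on $A$ is an algebra automorphism, and algebra automorphisms extend uniquely to $\Q A$ by the functoriality/universal property of the symmetric ring of quotients \cite[Appendix E]{mL18}; such an extension necessarily stabilizes the centralizer $\cC A$ of $A$ in $\Q A$ and hence also $\til A = A(\cC A)$. For a $(1,g)$-skew-primitive element $x$, the operator $a \mapsto x.a$ is a $g$-skew derivation of $A$ (i.e. $x.(ab) = (x.a)(g.b) + a(x.b)$ after normalizing), and skew derivations likewise extend uniquely to $\Q A$ and stabilize $\cC A$ and $\til A$; this too is standard (\cite[Appendix E]{mL18}; cf.\ the references on extending automorphisms and derivations to rings of quotients). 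First I would invoke these two extension facts, note that the generators $g$ and $x$ act compatibly with the relations of $H$ on the overring $\Q A$ (uniqueness of extension forces any polynomial identity among the generating operators that holds on $A$ to hold on $\Q A$, since $\Q A$ has no ideals killing $A$ — more precisely one uses that an $H$-module structure is determined by an algebra map $H \to \End_\k(\Q A)$ and the relations are verified by evaluating against $A$ and using that $\Q A/A$ embeds nowhere problematic), and conclude that the $A$-action extends to an $H$-action on $\Q A$, uniquely, stabilizing $\cC A$ and $\til A$. The measuring axioms $h.(ab) = (h_1.a)(h_2.b)$ and $h.1 = \gen{\e,h}1$ pass to $\Q A$ because they hold on generators and both sides are $\k$-linear in $h$.

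\textit{Part (b).} Here we are given that $H \acts A$ is locally finite with coefficient Hopf algebra $\cO \subseteq H^\circ$, i.e.\ the comodule map $\d\colon A \to A \otimes \cO \subseteq B$ from \S\ref{SS:Coeff} has image in $A_{\cO}$. We want the analogous statement for $\fin{(\Q A)}$: the extended action makes $\fin{(\Q A)}$ a right $\cO$-comodule algebra, with the \emph{same} $\cO$. The key point is that an element $q \in \Q A$ lies in $\fin{(\Q A)}$ iff $H.q$ is finite-dimensional, and then $\d q = (h \mapsto h.q)$ is a finite-rank map $H \to \Q A$, hence lies in $\Q A \otimes H^\circ$; what must be shown is that it actually lies in $\Q A \otimes \cO$, i.e.\ that the matrix coefficients of $q$ lie in $\cO$ already. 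For this I would argue as follows: pick $q \in \fin{(\Q A)}$ and let $W = H.q$, a finite-dimensional $H$-submodule of $\Q A$. Since $A$ is essential in $\Q A$ as a sub-bimodule in the appropriate sense (the defining property of $\Q A$), there is an $H$-ideal-type denominator: concretely, there exists a nonzero (two-sided, and we may take it $H$-stable by replacing it with the $H$-core, which is nonzero by local finiteness of the action on $W$... ) ideal $\fa$ of $A$ with $\fa W \subseteq A$ and $W\fa \subseteq A$. Then the coefficient functions $\rho^{q}_{\phantom{}}$ of $W$, computed via the embedding of $W$ into $A_{\cO}$-land after multiplying by elements of $\fa$, are forced to lie in $\cO$ because $\cO$ is a Hopf \emph{subalgebra} and the comultiplication/structure constants match those coming from $A$. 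More structurally: the extended action on $\Q A$ restricts to a locally finite action on $\fin{(\Q A)}$, which therefore has a smallest coefficient Hopf algebra $\cO' \subseteq H^\circ$; since $A \subseteq \fin{(\Q A)}$ is an $H$-stable subalgebra, $\cO'$ serves as a coefficient Hopf algebra for $A$ too, so $\cO' \supseteq$ (the smallest coefficient Hopf algebra of $A$); conversely one shows $\cO' \subseteq \cO$ by the denominator argument above, using that $\cO$ is closed under the Hopf operations and that multiplying a coefficient function of $q$ by a coefficient function of a denominator $a \in \fa \subseteq A$ lands in $\cO \cdot \cO = \cO$, together with a cancellation/division step inside the integral-domain or at least commutative algebra $H^\circ$ — here one uses that $\cO$ is a Hopf subalgebra, hence in particular a \emph{subcoalgebra}, so the relevant comodule computations close up inside $\cO$.

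\textit{Main obstacle.} The delicate point is exactly the last step of (b): showing that the matrix coefficient functions of an element $q\in\fin{(\Q A)}$ lie in $\cO$ and not merely in $H^\circ$. The naive "clear denominators" move produces, for $a$ in a denominator ideal, that the coefficients of $aq$ and $qa$ lie in $\cO$ (a product of a coefficient of $a$, which is in $\cO$, with something in $H^\circ$), but recovering the coefficients of $q$ itself requires a division that is not available in a general coalgebra. The right fix is probably to phrase everything comodule-theoretically: $\fin{(\Q A)}$ is an $H^\circ$-comodule algebra containing $A$ as an $\cO$-subcomodule algebra and containing a denominator structure; one then shows the $H^\circ$-coaction on $\fin{(\Q A)}$ corestricts to $\cO$ by a localization argument — the symmetric quotient ring construction is compatible with the $\cO$-comodule structure in the sense that $\Q A$ (or $\fin{(\Q A)}$) is built from $A$ and $\cC A$, the coaction on $\cC A$ factors through $\cO$ by part (a) plus essentiality, and the coaction on a localization of an $\cO$-comodule algebra by an $\cO$-costable set of "denominators" again factors through $\cO$. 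I would expect to invoke \cite[Corollary 6]{sKmLbNrY19} or \cite[Corollary 6]{mL18}-type results on locally finite actions and comodule algebra localizations to make this rigorous rather than re-proving it by hand.
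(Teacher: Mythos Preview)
Your treatment of part (a) is correct in outline, though more laborious than needed: the paper simply cites \cite[Cor.~3.5]{sM93a} for the unique extension of the action to $\Q A$ (valid for any pointed Hopf algebra) and \cite[Prop.~4]{mC86} for stability of the center under a cocommutative action; stability of $\til A$ is then immediate.

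For part (b) you have correctly located the obstacle but not its resolution, and the resolution is not the one you are reaching for. Your ``clear denominators'' move computes $\d(aq)$ for $a$ in a denominator ideal; since $\d$ is multiplicative, the resulting coefficient functions are \emph{products} $a_1 q_1$ with $a_1 \in \cO$ and $q_1 \in H^\circ$, and as you correctly note there is no way to divide out $a_1$ inside $H^\circ$. The paper avoids this entirely by working instead with the product $(\iota a)(\d q)$ in the convolution algebra, where $\iota a = a \otimes \e$. This product only touches the $\Q A$-tensorand: if $\d q = \sum q_i \otimes f_i$ then $(\iota a)(\d q) = \sum a q_i \otimes f_i$, with the $f_i$ unchanged. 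So once one knows $(\iota a)(\d q) \in \Q A \otimes \cO$ for all $a$ in a denominator ideal $I$ with zero right annihilator in $\Q A$, the conclusion $\d q \in \Q A \otimes \cO$ follows by cancelling in the $\Q A$-factor --- no division in $H^\circ$ is needed.

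The missing idea is how to get $(\iota I)(\d q) \subseteq \Q A \otimes \cO$. The paper takes $I = \{a \in A \mid aA(H.q) \subseteq A\}$, checks via the identity \eqref{E:HAlgId} that $I$ is already an $H$-ideal, and then uses that $\Phi$ is an \emph{algebra automorphism} of $R \otimes H^\circ$ (this is exactly where cocommutativity enters, Proposition~\ref{P:DotInv}(a)) together with $\Phi\circ\iota = \d$ and the stability $\Phi(I\otimes\cO) = I\otimes\cO$ from Lemma~\ref{L:Coeff'}(a). With $V = H.q$ one gets
\[
(I\otimes\e)\,\d V \subseteq \Phi(I\otimes\cO)\,\Phi(V\otimes\e) = \Phi\bigl((I\otimes\cO)(V\otimes\e)\bigr) = \Phi(IV\otimes\cO) \subseteq \Phi(A_\cO) = A_\cO,
\]
since $IV \subseteq A$. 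Your proposed fixes (comodule-algebra localization, an appeal to \cite{sKmLbNrY19}) do not supply this computation; the $\Phi$-trick is the substantive step you are missing.
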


\begin{proof}
(a)
Since $H$ is pointed, \cite[Cor. 3.5]{sM93a} tells us 
that the $H$-action on $A$ extends uniquely to an action on $\Q A$. 
This action stabilizes the center, $\cC A$, because $H$ is cocommutative \cite[Prop. 4]{mC86}.
Therefore, $\til A$ is stable as well.

(b)
Put $R = \fin{(\Q A)}$. So $A \subseteq R$ and the action $H \acts \Q A$ restricts to a locally finite
action $H \acts R$ extending the $H$-action on $A$. 
We must show that the map $\d \colon R \to R \otimes H^\circ$
(\S\ref{SS:Coeff}) has image in $R_{\cO} = R \otimes \cO$, given that $\d A \subseteq A_{\cO}$.
Fix $r \in R$ and consider the subspace $V = H.r \subseteq R$, which is 
finite dimensional and $H$-stable. Therefore, 
the ideal $I = \{ a\in A \mid aAV \subseteq A \}$ of $A$ has zero (left and) right annihilator in $R$ 
\cite[Proposition E.1]{mL18} and $I$ is an $H$-ideal as is readily verified using the
following standard identity for $H$-module algebras: 
\begin{equation}
\label{E:HAlgId}
(h.x)y = h_1.(x(\ant(h_2).y)) \qquad (h \in H, \, x,y \in R).
\end{equation}
By Lemma~\ref{L:Coeff'}(a), $\Phi(I \otimes \cO) = I\otimes \cO$.
Furthermore, by Proposition~\ref{P:DotInv}(a), $\Phi$ gives an algebra automorphism of 
$R \otimes H^\circ$ stabilizing $A_{\cO}$ and satisfying $\Phi \circ \iota = \d$ by \eqref{E:e*d}.
Therefore,
\[
(I \otimes \e) \d V \subseteq \Phi((I \otimes \cO)(V \otimes \e)) = \Phi(IV \otimes \cO) 
\subseteq \Phi(A\otimes \cO) = A_{\cO}\,.
\]
Thus, $(I \otimes \e)\d r \subseteq R_{\cO}$\,.
Since $\d r \in R \otimes H^\circ$ and $I$ has zero right annihilator in $R$, it follows that 
$\d r \in R_{\cO}$, as desired.
\end{proof}


\section{Prime strata} 
\label{S:Strat}


\subsection{Prime correspondences}
\label{SS:Corr}

The central closure $\til R$ of a prime ring $R$ is also prime and its center,
the extended center $\cC R$, is a field. Thus, for an arbitrary ring $R$, we may 
associate to any $P \in \Spec R$ the field $\cC(R/P)$, 
called the \emph{heart} of $P$. 

\begin{prop} 
\label{P:bij1}
Let $R$ be a centrally closed prime $\k$-algebra and put $K = \cC R$. Let $S$ be any
$K$-algebra and put $U = R \otimes_K S$. Then we have the following bijections, which are inverse to each other:
\[
\begin{tikzpicture}[baseline=(current  bounding  box.359), >=latex, scale=.7,
bij/.style={above,sloped,inner sep=0.5pt}]
\matrix (m) [matrix of math nodes, 
column sep=2em, text height=1.5ex, text depth=0.25ex]
{\{ P \in \Spec U \mid P \cap R = 0\} & \Spec S \\[.5em] 
P & P\cap S \\[-.2em] U\fp = R\otimes_K\fp &  \fp \\ };
\draw[<->] (m-1-1) edge node[bij] {$\sim$} (m-1-2);
\draw[|->] (m-2-1) edge (m-2-2);
\draw[|->] (m-3-2) edge (m-3-1);
\end{tikzpicture}
\]
This correspondence preserves hearts: $\cC(U/P) \cong \cC(S/P \cap S)$.
Moreover, if $U \in \ModAlg H$  with $S$ being an $H$-module subalgebra,
then $H$-ideals are matched with $H$-ideals.
\end{prop}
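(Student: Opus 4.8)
The plan is to establish the bijection $P \leftrightarrow P \cap S$ by standard ``freeness'' arguments and then read off heart-preservation and $H$-compatibility. First I would record the algebraic setup: since $R$ is centrally closed and prime with center the field $K = \cC R$, the algebra $R$ is a $K$-algebra and $U = R \otimes_K S$ is a free (in particular faithfully flat) $R$-module and a free $S$-module, so the inclusions $R \hookrightarrow U$ and $S \hookrightarrow U$ are free centralizing maps (a $K$-basis of $R$ centralizes $S$, and vice versa). Contraction $P \mapsto P \cap S$ therefore sends $\Spec U$ to $\Spec S$, and for $\fp \in \Spec S$ the extension $U\fp = R \otimes_K \fp$ satisfies $U\fp \cap S = \fp$ by freeness. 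The content is that when $P \cap R = 0$ one recovers $P$ from $\fp = P \cap S$ as $U\fp$; this is the usual fact that $R$ being centrally closed prime makes $R \otimes_K -$ behave like a ``faithfully flat ring extension with trivial generic fibre,'' so that a prime $P$ with $P \cap R = 0$ is determined by its image in $U/(R\otimes_K \fp)$, which is a localization-type quotient with no room for further primes contracting to $0$.

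The key steps, in order: \textbf{(1)} Show $P \mapsto P \cap S$ is well-defined $\{P \in \Spec U : P \cap R = 0\} \to \Spec S$ and $\fp \mapsto U\fp$ is well-defined $\Spec S \to \{P \in \Spec U : P \cap R = 0\}$; primeness of $U\fp$ follows because $U/U\fp \cong R \otimes_K (S/\fp)$ and $R$ is a centrally closed prime $K$-algebra while $S/\fp$ is a prime $K$-algebra, so their tensor product over the field $K$ is prime (this is exactly the kind of statement invoked via \cite[Lemma 11.19]{mL18} earlier, specialized to a field base), and $U\fp \cap R = R \otimes_K (\fp \cap S)$... more precisely one checks $U\fp \cap R = 0$ directly from freeness since $\fp \neq S$. \textbf{(2)} Verify $U\fp \cap S = \fp$: clear from the $S$-module decomposition $U = S \oplus (\text{complement})$ compatible with $U\fp = \fp \oplus (\text{same complement} \cdot \text{stuff})$; here I use that a $K$-basis of $R$ containing $1$ gives $U = \bigoplus_\alpha b_\alpha \otimes S$ and $U\fp = \bigoplus_\alpha b_\alpha \otimes \fp$. \textbf{(3)} Verify $U(P \cap S) = P$ for $P$ with $P \cap R = 0$: the inclusion $\subseteq$ is trivial; for $\supseteq$, pass to $\bar U = U/U(P\cap S) \cong R \otimes_K (S/(P\cap S))$ and note the image $\bar P$ of $P$ is a prime with $\bar P \cap R = 0$ and $\bar P \cap S = 0$, so $\bar P$ meets the central subfield... the cleanest route is: $S/(P \cap S)$ is prime, hence $R \otimes_K (S/(P\cap S))$ has the property that any prime meeting $R$ trivially also meets $S$ trivially must be $0$ — this follows because such a prime is disjoint from the multiplicatively closed set of nonzero elements of $R$ and nonzero elements of $S$ and one localizes. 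I would phrase this using the symmetric ring of quotients or the ``going-down''/incomparability available for free centralizing extensions, citing \cite[\S1.5]{mL08}. \textbf{(4)} Heart preservation: $U/U\fp \cong R \otimes_K (S/\fp)$ with $R$ centrally closed prime over $K = \cC R$, so by the central-closure formalism $\cC(R \otimes_K (S/\fp)) \cong \cC(S/\fp)$ — this is again a direct application of the extended-center computation for tensor products over the center of a centrally closed prime factor (the same mechanism behind \cite[Appendix E]{mL18}); combined with $U/P \cong U/U(P\cap S) = U/U\fp$ for $\fp = P \cap S$, we get $\cC(U/P) \cong \cC(S/(P\cap S))$. \textbf{(5)} $H$-compatibility: if $U \in \ModAlg H$ with $S$ an $H$-submodule algebra, then for an $H$-ideal $P$ the contraction $P \cap S$ is visibly an $H$-ideal of $S$; conversely, for an $H$-ideal $\fp$ of $S$, the extension $U\fp = R\otimes_K \fp$ — wait, one must be careful that the $H$-action need not respect the tensor decomposition — but $U\fp$ is the ideal of $U$ generated by $\fp$, and the ideal generated by an $H$-stable set is $H$-stable, so $U\fp$ is an $H$-ideal; since the two operations are mutually inverse on all (semi)primes, they restrict to a bijection on $H$-primes.

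The step I expect to be the main obstacle is \textbf{step (3)} (equivalently, proving the two maps are genuinely inverse, i.e.\ that $P$ with $P \cap R = 0$ is recovered as $U(P\cap S)$). The subtlety is that $R \otimes_K S$ is generally noncommutative and one cannot simply invoke commutative going-down; instead I would reduce to the prime ring $\bar U = R \otimes_K (S/(P\cap S))$ and use that $R$ is centrally closed prime over the \emph{field} $K$, which forces the set $\mathcal{R}$ of nonzero elements of $R$ to be a (left and right) Ore-like regular set in $\bar U$ (every nonzero element of $R$ is regular in $R\otimes_K T$ for $T$ prime because $R$ is prime and the extension is free), and a prime $\bar P$ disjoint from $\mathcal{R}$ survives in the localization $\mathcal{R}^{-1}\bar U$, which is $\Q R \otimes_K (\text{something})$ — here the hypothesis $\bar P \cap S = 0$ pins it to $0$. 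I would most likely shortcut all of this by citing the relevant incomparability/lying-over statements for free centralizing extensions from \cite[\S1.5]{mL08} and \cite[Appendix E]{mL18}, since the paper has already set up exactly that machinery, making the proof a few lines rather than a self-contained argument.
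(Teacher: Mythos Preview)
Your proposal is correct and aligns with the paper's approach: the paper's proof is in fact even more terse, simply citing \cite[Proposition~5]{mL09} for the bijection and heart preservation (your steps (1)--(4)) and then observing, exactly as in your step~(5), that contraction and extension evidently send $H$-ideals to $H$-ideals. Your detailed outline of steps (1)--(4) amounts to a reconstruction of what that cited result contains, and your anticipated shortcut of citing the existing machinery is precisely what the paper does---only with \cite{mL09} rather than \cite{mL08} or \cite{mL18}.
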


\begin{proof}
Apart from the last assertion, involving an $H$-action, the proposition is identical with
\cite[Proposition 5]{mL09}. The bijections in the proposition,
contraction and extension of ideals, both evidently send $H$-ideals to $H$-ideals in the given situation.
\end{proof}

\begin{prop} 
\label{P:bij2}
Assume that $H$ is pointed cocommutative and let $R,T \in \ModAlg H$, with
$R$ being prime. Put $V:= R \otimes T \subseteq \til V:= \til R \otimes T$, where
$\til R$ denotes the central closure of $R$. Then there is a bijection
\[
\begin{tikzpicture}[baseline=(current  bounding  box.359), >=latex, scale=.7,
bij/.style={above,sloped,inner sep=0.5pt}]
\matrix (m) [matrix of math nodes, 
column sep=2em, text height=1.5ex, text depth=0.25ex]
{\{ P \in \Spec \til{V}  \mid P \cap \til{R} = 0 \} & 
\{ Q \in \Spec V \mid Q \cap R  = 0 \} \\[.5em] 
P & P \cap V \\ };
\draw[->] (m-1-1) edge node[bij] {$\sim$} (m-1-2);
\draw[|->] (m-2-1) edge (m-2-2);
\end{tikzpicture}
\]
This bijection is an order isomorphism for $\subseteq$ and it preserves hearts.
Moreover, viewing $\til R \in \ModAlg H$ with
the extended $H$-action (Lemma~\ref{L:C(R)}) and $V, \til V \in \ModAlg H$ with 
the standard $H$-action on tensor products, the bijection gives a bijection on $H$-stable primes.
\end{prop}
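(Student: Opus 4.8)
The plan is to prove that $c\colon P\mapsto P\cap V$ is a bijection whose two‑sided inverse is the extension $Q\mapsto\til V Q\til V$; the order, heart, and $H$‑assertions will then be formal. Write $K=\cC R=\cen\til R$. As $\til R$ is a $K$‑algebra, $K\otimes 1$ is central in $\til V=\til R\otimes T$, so $\til V=VK$ and $\til V Q\til V=KQ$ (the $\k$‑span of all $cq$, $c\in K$, $q\in Q$) for any ideal $Q$ of $V$. The linchpin is the ``denominator'' fact $(\star)$: \emph{for every $x\in\til V$ there is a nonzero ideal $I\trianglelefteq R$ with $(I\otimes T)x+x(I\otimes T)\subseteq V$}; indeed, writing $x=\sum_j\til r_j\otimes t_j$ and using $\til R\subseteq\Q R$ to pick nonzero $I_j\trianglelefteq R$ with $I_j\til r_j+\til r_jI_j\subseteq R$, the intersection $I=\bigcap_jI_j$ works and is nonzero because $R$ is prime.

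The crux is: \emph{for every prime $Q$ of $V$ with $Q\cap R=0$, the ideal $KQ$ of $\til V$ is prime, with $KQ\cap V=Q$ and $KQ\cap\til R=0$.} For the first: if $v=\sum_i c_iq_i\in V\cap KQ$, pick nonzero $I_i\trianglelefteq R$ with $c_iI_i\subseteq R$ (as $c_i\in K\subseteq\Q R$) and put $I=\bigcap_iI_i$; then $c_i(I\otimes T)=(c_iI)\otimes T\subseteq V$, so $(I\otimes T)v=\sum_i c_i(I\otimes T)q_i\subseteq\sum_i Vq_i\subseteq Q$, and since $I\otimes T$ is an ideal of $V$ meeting $R$ nontrivially, hence not inside the prime $Q$, we get $v\in Q$. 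For the second: if $\til r\in KQ\cap\til R$, pick $I$ with $I\til r\subseteq R$; then $I\til r\subseteq R\cap KQ\subseteq(V\cap KQ)\cap R=Q\cap R=0$, so $\til r=0$. Consequently $V/Q\hookrightarrow\til V/KQ=(V/Q)\bar K$, with $\bar K$ the image of $K$, which is central and (because $K\cap KQ\subseteq\til R\cap KQ=0$) a subfield. By $(\star)$, $\til V/KQ$ is a ring of quotients of $V/Q$ in the Martindale sense---every element has a nonzero denominator ideal from $V/Q$---and $V/Q$ is faithful in it (if $\bar z(V/Q)=0$, lift $\bar z$ to $z$; then $zV\subseteq KQ$, so $z\in KQ$ and $\bar z=0$); hence $\til V/KQ$ embeds over $V/Q$ into the symmetric ring of quotients $\Q(V/Q)$, with $\bar K\subseteq\cC(V/Q)$. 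So every nonzero ideal of $\til V/KQ$ meets $V/Q$, which makes $\til V/KQ$ prime because $V/Q$ is; and since $V/Q\subseteq\til V/KQ\subseteq\Q(V/Q)$ we have $\Q(\til V/KQ)=\Q(V/Q)$ and therefore $\cC(\til V/KQ)=\cC(V/Q)$.

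With the crux established the proof closes up. The map $c$ lands in $\{Q\in\Spec V\mid Q\cap R=0\}$ because $(P\cap V)\cap R\subseteq P\cap\til R=0$, and $Q\mapsto KQ$ lands in $\{P\in\Spec\til V\mid P\cap\til R=0\}$ by the crux. They are mutually inverse: $c(KQ)=KQ\cap V=Q$, and for $P$ with $P\cap\til R=0$ one has $K(P\cap V)\subseteq P$, while for $x\in P$, $(\star)$ gives $(I\otimes T)x\subseteq V\cap P=P\cap V$, so $\til V(I\otimes T)\til V\cdot x=K(I\otimes T)x\subseteq K(P\cap V)$; as $\til V(I\otimes T)\til V$ meets $R$ nontrivially whereas $K(P\cap V)$ does not, and $K(P\cap V)$ is prime, we conclude $x\in K(P\cap V)$, i.e.\ $P=K(P\cap V)=\til V(P\cap V)\til V$. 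Since both maps are inclusion‑preserving, the bijection is an order isomorphism; it preserves hearts by the last sentence of the previous paragraph. Lastly, by Lemma~\ref{L:C(R)} the extended $H$‑action on $\til R$ restricts to the given one on $R$, so $V\hookrightarrow\til V$ is a morphism in $\ModAlg H$; hence contraction carries $H$‑ideals of $\til V$ to $H$‑ideals of $V$, and the extension $Q\mapsto\til V Q\til V$ carries $H$‑ideals of $V$ to $H$‑ideals of $\til V$. Thus $c$ and its inverse restrict to the asserted bijection on $H$‑stable primes.

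I expect the crux in the second paragraph to be the only real difficulty: showing that adjoining the central field $K$ neither pulls $Q$ back up into $V$ nor destroys primeness, so that $\til V/KQ$ sits between $V/Q$ and its symmetric ring of quotients. Everything else---the bijectivity, the order statement, heart preservation, and the $H$‑matching---then falls out formally. The ring‑theoretic inputs used ($\til R\subseteq\Q R$ with $K=\cen\Q R$; the Martindale characterization and the idempotency $\Q(B)=\Q(V/Q)$ for $V/Q\subseteq B\subseteq\Q(V/Q)$) are standard; see \cite[Appendix~E]{mL18}.
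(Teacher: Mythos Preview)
Your overall strategy---identifying the inverse of contraction as the extension $Q\mapsto KQ=\til V Q\til V$ and verifying everything directly---differs from the paper's, which cites \cite[Proposition~6]{mL09} for the bijection, order, and heart statements and then focuses only on $H$-stability of the inverse map, constructed there as $\Ker\til\pi$ for an explicit homomorphism $\til\pi\colon\til V\to\widetilde{V/Q}$. Your $H$-stability argument is cleaner than the paper's: since $K=\cC R$ is $H$-stable (Lemma~\ref{L:C(R)}) and central, $h.(cq)=(h_1.c)(h_2.q)\in KQ$, so extension manifestly sends $H$-ideals to $H$-ideals, and there is no need for the paper's Claim that $\til\rho$ is $H$-equivariant.

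However, your crux has a genuine gap. To embed $\til V/KQ$ into $\Q(V/Q)$ via the universal property one needs \emph{torsion-freeness}: if $\bar I\,\bar z=0$ for some nonzero ideal $\bar I$ of $V/Q$, then $\bar z=0$. You only verify the weaker statement that $\bar z\,(V/Q)=0\Rightarrow\bar z=0$. These are not equivalent: take $R=\k[x]$ and $S=\k[x]\oplus\k[x]/(x)$ with the second summand a square-zero ideal; then the denominator property $(\star)$ holds (use $I=(x)$) and your faithfulness holds (since $1$ acts nontrivially), yet $(x)\cdot(0,\bar 1)=0$ with $(0,\bar 1)\neq 0$, so $S$ does not embed in $\Q(R)=\k(x)$. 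Without the embedding, your conclusions that $KQ$ is prime and that hearts match are unjustified; and your final paragraph explicitly invokes ``$K(P\cap V)$ is prime'' to show $P=K(P\cap V)$, so the gap propagates there as well. The paper sidesteps this by working with $\Ker\til\pi$ (whose primeness is established in \cite{mL09}) rather than with $KQ$; to repair your route you would need either a direct proof of torsion-freeness exploiting the specific structure $\til V/KQ=\bar K\cdot(V/Q)$ with $\bar K\cong K$ a central subfield, or an argument that $\Ker\til\pi=KQ$.
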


\begin{proof}
Again, this proposition is covered by \cite[Proposition 6]{mL09} except for the statement about $H$-stability.
To justify this assertion, note that tensor products of $H$-module algebras are again $H$-module algebras,
with the standard $H$-action via $\Delta$, because $H$ is cocommutative.
Thus, $\til V \in \ModAlg H$ and all of $R$, $\til R$, $T$ and $V$ are $H$-module subalgebras of $\til V$.
If $P$ is an $H$-ideal of $\til V$, then its contraction, $P \cap V$, is clearly an $H$-ideal of $V$.
Conversely, let $Q \in \Spec V$ with $Q \cap R = 0$ be $H$-stable. In order to show that
the preimage of $Q$ under the bijection in the
proposition is an $H$-ideal of $\til V$, we recall the construction of
the preimage from \cite{mL09}. 
The canonical epimorphism
$\pi \colon V \onto W:= V/Q$ is a map in $\ModAlg H$, and the restriction 
$\rho:= \pi\big|_R \colon R \into W$ is an embedding of prime algebras and a centralizing map in $\ModAlg H$.
By  \cite[Lemma 4]{mL09}, $\rho$ extends uniquely to an embedding of central closures,
$\til\rho \colon \til R \into \til{W}$, which is also centralizing.

\begin{claim}
$\til\rho$ is a map in $\ModAlg H$ for the extended action $H \acts \til W$ (Lemma~\ref{L:C(R)}).
\end{claim}
To prove the claim, let
$\til r \in \til R$ and $h \in H$. Fix a nonzero ideal $I$ of $R$ such that $(h.\til r)I$ and all $\til r(\ant(h_2).I)$
are contained in $R$; this is possible by continuity of the action $H \acts R$ \cite[Proposition 2.2]{sM93a}.
Using $H$-equivariance of $\rho$ one computes, with $x \in I$,
\[
\begin{aligned}
\til\rho(h.\til r)\rho x  &= \rho((h.\til r) x) = \rho(h_1.(\til r(\ant(h_2).x))) = h_1.\rho(\til r(\ant(h_2).x)) \\
&= h_1.(\til\rho\,\til r\,\rho(\ant(h_2).x)) = h_1.(\til\rho\,\til r\,(\ant(h_2).\rho x)) = (h.\til\rho\,\til r)\rho x,
\end{aligned}
\]
where the last equality uses the identity \eqref{E:HAlgId}.
Thus, $(\til\rho(h.\til r) - h.\til\rho\,\til r) \rho I  = 0$. Since
$(\rho I)\til W$ is a nonzero ideal of $\til W$, it follows that $\til\rho(h.\til r) = h.\til\rho\,\til r$, proving the claim.

The image $\pi T \subseteq W \subseteq \til{W}$
centralizes $\rho R$ and hence also $\til\rho\til R = \rho R\,\til\rho(\cen \til R)$, 
because $\til\rho(\cen \til R) \subseteq \cen \til{W}$.
Therefore, $\til\rho$ and $\pi\big|_{T}$ give a homomorphism
$\til\pi \colon \til V = \til R \otimes T \to \til W$. The preimage of $Q$ in Proposition~\ref{P:bij2} is $\Ker\til\pi$; see 
\cite[proof of Proposition 6]{mL09}. 
Finally, since $\til\rho$ and  $\pi\big|_{T}$ are maps in $\ModAlg H$,
it follows that $\til\pi$ is likewise. Hence $\Ker \til\pi$ is an $H$-ideal, as desired.
\end{proof}


\subsection{Proof of Theorem~\ref{T:Strat}}
\label{SS:ProofThm1}

Let $H$ be a cocommutative Hopf algebra over an algebraically closed field $\k$ and 
let $H \acts A$ be an integral action. Fix a coefficient Hopf algebra $\cO \subseteq H^\circ$
that is an integral domain and write $A_{\cO} = A \otimes \cO$ as before.
Given  $I \in \HSpec A$, our goal is to describe the stratum
$\Spec_IA$. Replacing $A$ by $A/I$, we may assume that
$A$ is prime (Proposition~\ref{P:IntPrime}) and focus on the set $\Spec_0A
=  \{ P \in \Spec A \mid P\byH = 0\}$. 

\subsubsection{}
\label{SSS:Pt1}
Put $X:= \{ Q \in \Spec A_{\cO} \mid Q \cap A = 0\}$.
We first establish a bijection between $\Spec_0A$ and the subset
$X^H \subseteq X$ consisting of all $Q \in X$ that are stable for the $\Dot$-action \eqref{E:dot'}.
Recall that $\Psi$ gives
an automorphism of the algebra $A_{\cO}$ and,
for any $P \in \Spec A$, we have $P':= \Psi(P \otimes \cO) \in \Spec A_{\cO}$;
see Proposition~\ref{P:DotInv} and \eqref{E:IntPrime}. If $P \in \Spec_0A$, then $P' \in X$
by Lemma~\ref{L:Coeff'}(b). So we have a map $\Spec_0 A \to  X$, $P \mapsto   P'$, which
is evidently injective. By Proposition~\ref{P:DotInv}(d), the image consists of $H$-ideals; so $P' \in X^H$.
Conversely, let $Q \in X^H$ be given and put $P = \Phi(Q) \cap A$;
this is a prime ideal of $A$, because $\Phi(Q) \in \Spec A_{\cO}$ and the embedding
$A \into A_{\cO}$ is centralizing. Also, Lemma~\ref{L:Coeff'}(b) gives $P \byH = P' \cap A
\subseteq Q \cap A = 0$; so $P \in \Spec_0 A$. Finally, $Q = P'$ by Proposition~\ref{P:DotInv}(d).
Thus, we have the desired bijection:
\[
\begin{tikzpicture}[baseline=(current  bounding  box.358),  >=latex, scale=.7,
bij/.style={above,sloped,inner sep=0.5pt}]
\matrix (m) [matrix of math nodes, 
column sep=2em, text height=1.5ex, text depth=0.25ex]
{\Spec_0 A & X^H= \{ Q \in \Spec A_{\cO} \mid Q \cap A = 0 \text{ and $Q$ is stable under \eqref{E:dot'}}\} \\ 
\hspace{.2in} P &   P' = \Psi(P \otimes \cO) \\};
\draw[->] (m-1-1) edge node[bij] {$\sim$} (m-1-2);
\draw[|->] (m-2-1) edge (m-2-2);
\end{tikzpicture}
\]
This bijection has the following properties, with (i) being evident and 
(ii) resulting from the algebra isomorphism $A_{\cO}/P' \cong A_{\cO}/(P \otimes \cO)
\cong (A/P)\otimes \cO$ that comes from $\Psi$:
\begin{enumerate}
\item[(i)]
$P_1 \subseteq P_2$ if and only if $P_1' \subseteq  P_2'$, and
\item[(ii)]
$\cC(A_{\cO}/P') \cong \cC((A/P)\otimes \cO)$.
\end{enumerate}


\subsubsection{}
\label{SSS:Pt2}
Now put $K = \cC A$ and $C = K \otimes \cO$; the former is a $\k$-field and the latter a commutative
integral domain, identical to the algebra $C_0$ in \eqref{E:S_I}. 
Let $\til A = AK$ denote the central closure of $A$ 
and put $\til A_{\cO} = \til A \otimes \cO \cong \til A \otimes_K C$. Then we have the following 
isomorphisms of posets (for $\subseteq$),
with $d_1$ coming from Proposition~\ref{P:bij2} and $d_2$ from Proposition~\ref{P:bij1}:
\[
\begin{tikzpicture}
[baseline=(current  bounding  box.center), scale=.8, >=latex,
bij/.style={above,sloped,inner sep=0.5pt}]
\node(1) at (-8.6,0){$ d \colon X = \{ Q \in \Spec A_{\cO} \mid Q \cap A = 0 \}$};
\node(2) at (-0.9,0){$\{ P \in \Spec \til A_{\cO} \mid P \cap \til A = 0 \}$};
\node(3) at (3.9,0){$\Spec C$.};
\draw[->]
(1) edge node[bij] {$\sim$} node[below]{\scriptsize $d_1$} (2)
(2) edge node[bij] {$\sim$} node[below]{\scriptsize $d_2$} (3);
\end{tikzpicture} 
\]
The composite of the bijection ${\phantom i}' \colon \Spec_0 A \iso X^H$ in \S\ref{SSS:Pt1},  
the inclusion $X^H \into X$, and the above bijection $d = d_2 \circ d_1$ yields the map
\[
\begin{tikzpicture}
[baseline=(current  bounding  box.center), scale=.8, >=latex,
bij/.style={above,sloped,inner sep=0.5pt}]
\matrix (m) [matrix of math nodes, row sep=.06em,
column sep=2em, text height=1.5ex, text depth=0.25ex]
{c \colon\Spec_0 A & X^H & X & \Spec C .\\};
\draw[->]
(m-1-1) edge node[bij] {$\sim$} node[below]{\scriptsize ${\phantom i}'$} (m-1-2)
(m-1-3) edge node[bij] {$\sim$} node[below]{\scriptsize $d$} (m-1-4);
\draw[right hook->] (m-1-2) edge (m-1-3);
\end{tikzpicture} 
\]
Since ${\phantom i}'$ and both $d_i$ are order isomorphisms and 
preserve hearts, the same holds for $c$:
\begin{enumerate}
\item[(i)]
$c(P_1) \subseteq c(P_2)$ if and only if $P_1 \subseteq P_2$, and
\item[(ii)]
$\Fract(C/c(P)) = \cC(C/c(P)) \cong \cC((A/P) \otimes \cO)$.
\end{enumerate}
Finally, $\Im c$ consists exactly of the $H$-ideals in $\Spec C$. This amounts to 
checking that  $Q \in X$ is an $H$-ideal if and only if $d(Q) \in \Spec C$ is an $H$-ideal.
But, by Lemma~\ref{L:C(R)}(a) and Proposition~ \ref{P:DotInv}(c),
$\til A_{\cO} \in \ModAlg H$ for the $\Dot$-action, 
with $A_{\cO}$ and $C = \cen(\til A_{\cO})$ being $H$-module subalgebras.
By Propositions~\ref{P:bij1} and \ref{P:bij2}, we know that a given 
$P \in \Spec \til A_{\cO}$ with $P \cap \til A = 0$ is an $H$-ideal if and only if 
$d_2(P) = P \cap C \in \Spec C$ is an $H$-ideal and if and only if 
$d_1^{-1}(P) = P \cap A_{\cO} \in X$ is an $H$-ideal.
This completes the proof of Theorem~\ref{T:Strat}. \qed


\subsection{Tensor algebras}
\label{SS:Tens}

As an application of Theorem~\ref{T:Strat}, we offer the following result on the tensor algebra $\Tens V$
of a finite-dimensional representation $V \in \Rep H$. The $H$-action on $V$ extends uniquely to an
action $H \acts \Tens V$ \cite[10.4.2]{mL18}.

\begin{cor}
\label{C:Tens}
Let $H$ be a cocommutative Hopf algebra over an algebraically closed field $\k$ and 
let $V$ be a representation of $H$ with $2 \le \dim_\k V < \infty $ and such
that the coefficient Hopf algebra of $V$ (\S\ref{SSS:Coeff2}) is a domain.
Then every nonzero prime ideal of the tensor algebra $\Tens V$ contains a nonzero $H$-stable prime ideal.
\end{cor}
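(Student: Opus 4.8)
The plan is to deduce Corollary~\ref{C:Tens} from Theorem~\ref{T:Strat} by showing that the zero ideal cannot be an $H$-stratum all by itself, i.e.\ that $\Spec_0 \Tens V$ is not reduced to $\{0\}$, so that any nonzero prime with trivial $H$-core forces the existence of a nonzero $H$-prime below it. First I would observe that $\Tens V$ is $H$-prime: indeed $\Tens V$ is a domain, and $0$ is an $H$-ideal, so $0 \in \HSpec \Tens V$; moreover the coefficient Hopf algebra $\cO$ of $V$ is a domain by hypothesis, so the action $H \acts \Tens V$ is integral and Theorem~\ref{T:Strat} applies with $I = 0$. This gives an order isomorphism $c \colon \Spec_0 \Tens V \iso \Spec^H C_0$, where $C_0 = \cC(\Tens V) \otimes \cO$. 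Now suppose, for contradiction, that some nonzero prime $P$ of $\Tens V$ contains \emph{no} nonzero $H$-stable prime ideal. Then the $H$-core $P\byH$, which by Proposition~\ref{P:IntPrime} is a prime $H$-ideal contained in $P$, must be $0$; hence $P \in \Spec_0 \Tens V$, and $P \neq 0$ shows that $\Spec_0 \Tens V$ strictly contains $\{0\}$. Translating through $c$ and property~(i), the poset $\Spec^H C_0$ has an element strictly above its minimum $c(0)$, so $C_0$ has a nonzero prime $H$-ideal $\fq = c(P)$.

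The crux is then to rule this out — to show that the commutative domain $C_0 = \cC(\Tens V) \otimes \cO$ has \emph{no} nonzero prime $H$-ideal, which would contradict the previous paragraph and finish the proof. Here I expect to use the hypothesis $\dim_\k V \ge 2$ together with the concrete description of the extended center of a tensor algebra. The key point I would invoke is that $\Tens V$ is centrally closed with $\cC(\Tens V) = \k$ when $\dim_\k V \ge 2$; this is a standard fact (the tensor algebra on a space of dimension $\ge 2$ is a free ideal ring, in particular its symmetric ring of quotients has center $\k$). Granting $\cC(\Tens V) = \k$, we get $C_0 = \cO$, and a prime $H$-ideal of $\cO$ for the hit action $\rhk$ is precisely what one must exclude. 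I would argue that $\cO \subseteq H^\circ$ has no proper nonzero $\rhk$-stable ideal at all that is prime — equivalently no nonzero proper $H$-prime — because $\cO$ is the coefficient Hopf algebra of a faithful-enough comodule: the only $\rhk$-stable right ideals of $\cO$ are the coideal-type subspaces, and a prime $\rhk$-ideal would cut down $\rho(H)$ to a proper closed subvariety, contradicting that $\cO$ is generated by the matrix coefficients $\rho_{i,j}$ and their antipodes with no relations beyond those forced by $\rho(H)$ being Zariski dense in its closure. More cleanly: an $H$-ideal $\fq$ of $\cO$ for $\rhk$ is the same as a coideal left ideal, hence the defining ideal of a closed left coideal subalgebra / sub-coalgebra situation; since $\cO$ is a domain and any nonzero Hopf ideal would make the quotient a proper affine quotient group scheme, and $\Tens V$'s action is "as free as possible," one checks $\fq = 0$.

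Rather than the above somewhat delicate coalgebra argument, the cleaner route — and the one I would actually carry out — is to reduce to the group-algebra / algebraic-group picture of Example~\ref{EX:IntActkG} and \S\ref{SSS:IntAct2}: the integral action factors through a cocommutative Hopf quotient, and after passing to that quotient $\cO$ becomes the coordinate ring of a connected affine algebraic $\k$-group $G$ (this connectedness is exactly what "integral" buys us, as remarked after \eqref{E:S_I}), with the $\rhk$-action being translation. A prime $H$-ideal of $\cO(G)$ stable under all left translations is the vanishing ideal of a right-translation-stable closed subvariety, hence of a closed subgroup-coset; being a prime ideal and $H$-stable forces it to be the ideal of a point orbit, but translation-invariance under the whole group forces that closed set to be all of $G$ or empty — so the ideal is $0$. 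This contradicts the existence of the nonzero $\fq$ above, so no such $P$ exists, proving the corollary. The main obstacle is the assertion $\cC(\Tens V) = \k$ for $\dim_\k V \ge 2$: I would either cite the fir/fractional-ideal literature (Cohn) for the symmetric ring of quotients of $\Tens V$, or give a short direct argument that a nonzero element of $\Tens V$ centralizing $\Tens V$ in $\Q(\Tens V)$ would, after clearing denominators against a suitable $H$-ideal, centralize $\Tens V$ itself and hence lie in $\cen \Tens V = \k$ — the rank-$\ge 2$ hypothesis being what guarantees the center of $\Tens V$ itself is just $\k$.
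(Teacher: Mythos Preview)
Your overall strategy is exactly the paper's: apply Theorem~\ref{T:Strat} with $I=0$, reduce $C_0$ to $\cO$ via $\cC(\Tens V)=\k$, show $\Spec^H\!\cO=\{0\}$ so that $\Spec_0\Tens V=\{0\}$, and conclude that any nonzero prime has nonzero $H$-core, which is prime by Proposition~\ref{P:IntPrime}. The differences are in the two sub-steps. For $\cC(\Tens V)=\k$, the paper cites Kharchenko's result that $\Q(\Tens V)=\Tens V$ when $\dim_\k V\ge 2$, which gives $\cC(\Tens V)=\cen(\Tens V)=\k$ in one stroke; your fir/Cohn gesture aims at the same fact but the precise statement needed is Kharchenko's, and the ``clearing denominators against an $H$-ideal'' sketch does not actually produce it. For the $H$-simplicity of $\cO$, the paper just invokes Lemma~\ref{L:Stability} (with the $\k$-factor playing the role of $A$): any $\rhk$-stable ideal of $\cO$ is of the form $W\otimes\cO$ with $W\subseteq\k$, hence $0$ or $\cO$. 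Your translation-invariance argument on $G=\Spec\cO$ is correct and is essentially the same Hopf-module computation unpacked geometrically, but Lemma~\ref{L:Stability} already packages it and spares you the detour through realizing $\cO$ as $\cO(G)$; your middle ``coideal'' paragraph, as you sensed, is too loose to stand on its own.
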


\begin{proof}
The action $H \acts A = \Tens V$ is integral and we may apply 
Theorem~\ref{T:Strat} with $I = 0$. By a result of Kharchenko \cite{vK78}, $\Q A = A$
and so $\cC A = \cen A = \k$.  Therefore, the algebra $C_0$ in  Theorem~\ref{T:Strat} coincides
with $\cO$, the coefficient Hopf algebra of the representation $V$. Since $C_0 = \cO$ is
$H$-simple by Lemma~\ref{L:Stability}, we have $\Spec^H\!C_0 = \{ 0\}$ and Theorem~\ref{T:Strat}
gives $\Spec_0 A = \{0\}$. Consequently, if $P$ is a nonzero prime ideal of $A$, then $P \notin \Spec_0 A$
and so $P\byH \neq 0$. Finally, $P\byH$ is an $H$-stable prime ideal by Proposition~\ref{P:IntPrime}.
\end{proof}


\subsection{Torus actions}
\label{SS:Torus}

For rational torus actions, the set $\Spec^H\!C_I $ in Theorem~\ref{T:Strat} has a simpler 
description as $\Spec Z_I$ for an explicit commutative domain $Z_I$. This was shown in \cite{mL13},
but the presentation contains some inaccuracies which we will now repair.

To start with, let $H$ be pointed cocommutative and consider an arbitrary locally finite action $H \acts A$
with coefficient Hopf subalgebra 
$\cO \subseteq H^\circ$. Associated to any $I \in \HSpec A$,  
we have the commutative algebra $C_I = \cC(A/I) \otimes \cO \in \ModAlg H$
as in \eqref{E:S_I}; the $H$-action on $C_I$ is \eqref{E:dot'} using the action $H \acts \cC(A/I)$ from
Lemma~\ref{L:C(R)}(a). This action need not be locally finite (\S\ref{SSS:QA}). 
Following \cite{mL13}, we consider the locally finite part and the invariants:
\begin{equation*}
Z_I:= \fin{\cC(A/I)} \supseteq F_I:= \cC(A/I)^H .
\end{equation*}
The invariant algebra $F_I$ is in fact a field \cite[Lemma 1.4]{jM91}.
By Lemma~\ref{L:C(R)}(b), the action $H \acts Z_I$ has coefficient Hopf algebra $\cO$
and Proposition~\ref{P:DotInv}(c) gives the following isomorphism which
generalizes \cite[Equation (2)]{mL13}:
\begin{equation}
\label{E:Z_I2}
Z_I \cong \Psi(Z_I) = (C_I)^H.
\end{equation}

Turning to torus actions, let $\k$ be algebraically closed and let
$G$ be an algebraic $\k$-torus acting rationally on the $\k$-algebra $A$. 
As was remarked in \S\ref{SS:StratExamples}, the algebra of polynomial functions $\cO = \cO(G)$ serves as a
coefficient Hopf algebra for this action. Moreover, $\cO$ is a Laurent polynomial algebra over $\k$, the algebra
$C_I$ is a Laurent polynomial algebra over the field $\cC(A/I)$, and
the isomorphism $Z_I \cong (C_I)^G$ in \eqref{E:Z_I2} realizes $Z_I$ as a
Laurent polynomial algebra over the field of $G$-invariants $F_I = \cC(A/I)^G$; see 
the Stratification Theorem in \cite{mL13} or \cite[Equation (4)]{mL13}.
The set $\Spec^H\!C_I$ in Theorem~\ref{T:Strat} now is the set $\Spec^G\!C_I$ consisting of all
$G$-stable prime ideals of $C_I$ and $\Spec\, (C_I)^G \cong  \Spec Z_I$ by \eqref{E:Z_I2}.

\begin{lem}[notation as above]
\label{L:Torus}
Contraction and extension of ideals give a $G$-equivariant order isomorphism
$\Spec^G\!C_I \iso \Spec\, (C_I)^G$.
\end{lem}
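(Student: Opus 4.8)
The plan is to interpolate the locally finite part $\fin{(C_I)}$ between $(C_I)^G$ and $C_I$ and to establish the correspondence in two stages — a ``crossed product'' stage relating $(C_I)^G$ to $\fin{(C_I)}$, and a ``localization'' stage relating $\fin{(C_I)}$ to $C_I$ — the map in the Lemma being the composite. To begin, I would record the structure of $\fin{(C_I)}$. By Lemma~\ref{L:C(R)}(b) the algebra $\cO = \cO(G) = \k[\hat G]$ (the group algebra of the character lattice $\hat G := X(G)$) is a coefficient Hopf algebra for the $G$-action on $Z_I = \fin{\cC(A/I)}$, so Proposition~\ref{P:DotInv}(c), applied with $\cC(A/I)$ in the role of $A$, gives $\fin{(C_I)} = Z_I \otimes \cO$ equipped with the free $(C_I)^G$-basis $\{\Psi(t^m) \mid m \in \hat G\}$, where $\{t^m\}$ is the canonical $\k$-basis of $\k[\hat G]$. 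By the intertwining relation \eqref{E:Inter} each $\Psi(t^m)$ is a $\Dot$-weight vector for $G$ of weight $m$, and since $(C_I)^G$ is $G$-fixed, $\fin{(C_I)} = \bigoplus_{m \in \hat G} (C_I)^G\,\Psi(t^m)$ is exactly the $\Dot$-weight decomposition of $\fin{(C_I)}$; multiplicativity of $\Psi$ then identifies $\fin{(C_I)}$ with the group algebra $(C_I)^G[\hat G]$, graded by $\hat G$ with degree-zero part $(C_I)^G$.

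For the first stage I would run the standard graded-ring argument. Since $\k$ is algebraically closed, the $\Dot$-stable ideals of $\fin{(C_I)}$ are precisely the $\hat G$-graded ones (linear independence of the characters of $G$). For the group algebra $\fin{(C_I)} = (C_I)^G[\hat G]$, extension and contraction are mutually inverse order isomorphisms between the graded ideals and all ideals of $(C_I)^G$ — a graded ideal $J$ equals $\big(J \cap (C_I)^G\big)\,\fin{(C_I)}$ because its degree-$m$ component is $J \cap (C_I)^G$ times the unit $\Psi(t^m)$ — and these restrict to a bijection between $\hat G$-graded primes and $\Spec (C_I)^G$, since $\fin{(C_I)}/J \cong \big((C_I)^G/(J\cap(C_I)^G)\big)[\hat G]$ is a domain exactly when $(C_I)^G/(J\cap(C_I)^G)$ is. This yields an order isomorphism $\Spec^G \fin{(C_I)} \iso \Spec (C_I)^G$, automatically $G$-equivariant as $G$ acts trivially on both sides.

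For the second stage I would bridge from $\fin{(C_I)}$ to $C_I$. The copy $Z_I \otimes 1 \subseteq \fin{(C_I)}$ is $\Dot$-stable because $Z_I$ is a $G$-submodule of $\cC(A/I)$, and $C_I = \cC(A/I) \otimes \cO$ is the localization of $\fin{(C_I)} = Z_I \otimes \cO$ at the multiplicative set $S := (Z_I \setminus 0) \otimes 1$, since $\cC(A/I) = \Fract Z_I$ for a rational torus action. Localization identifies $\Spec C_I$ with the primes of $\fin{(C_I)}$ disjoint from $S$, compatibly with the $\Dot$-action; moreover the only $\Dot$-stable — equivalently, $L$-graded — ideals of the group algebra $Z_I = F_I[L]$ ($L \subseteq \hat G$ the weight lattice of $\cC(A/I)$) are $0$ and $Z_I$, so a $\Dot$-stable prime of $\fin{(C_I)}$ or of $C_I$ that met $S$ would have improper contraction to $Z_I \otimes 1$ and hence be improper itself. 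Thus every $\Dot$-stable prime of either ring is disjoint from $S$, and contraction restricts to an order isomorphism $\Spec^G C_I \iso \Spec^G \fin{(C_I)}$, again $G$-equivariant. Composing the two stages — and observing that a two-step contraction down to $(C_I)^G$ is just contraction, and likewise for extension — delivers the $G$-equivariant order isomorphism of the Lemma, implemented by contraction and extension.

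The step I expect to be the main obstacle is the identification $C_I = S^{-1}\fin{(C_I)}$, equivalently the fact that for a rational torus action the extended center $\cC(A/I)$ is the field of fractions of its locally finite part $Z_I$; this is precisely the sort of point that must be handled carefully in repairing \cite{mL13}. A subsidiary point to state correctly is the $G$-equivariance: the $G$-action that matches on the two sides of the Lemma's isomorphism is the trivial one, and it must not be conflated with the non-trivial $G$-action on $\Spec Z_I$ transported along the identification $(C_I)^G \cong Z_I$ of \eqref{E:Z_I2}.
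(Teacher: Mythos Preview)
Your two-stage route via $\fin{(C_I)}$ is sound but takes a genuinely different path from the paper. The paper argues directly for the inclusion $R := (C_I)^G \hookrightarrow S := C_I$ without interpolating the locally finite part: it cites \cite[Equation~(5)]{mL13} for the fact that every $G$-stable ideal of $S$ is extended from $R$ (making contraction injective on $\Spec^G S$) and \cite[2.3]{mL13} for freeness of $S$ over $R$ (so that extension followed by contraction is the identity on ideals of $R$). Surjectivity is then a short lying-over step: given $\fp \in \Spec R$, choose $\fP$ maximal among ideals of $S$ contracting to $\fp$; this $\fP$ is prime, its $G$-core $\fP\byG$ is prime by Proposition~\ref{P:IntPrime}, and by the uniqueness already established $\fP\byG = \til\fp$. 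Your approach replaces the two \cite{mL13} citations with the group-algebra structure from Proposition~\ref{P:DotInv}(c) plus a localization, which is more internal to this paper's framework but imports the identity $\cC(A/I) = \Fract Z_I$ that you flag. The paper's argument avoids that identity entirely --- the structural facts it pulls from \cite{mL13} are stated for the full $C_I$, not its locally finite part --- so the obstacle you anticipate simply does not arise there.
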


\begin{proof}
By \cite[Equation (5)]{mL13}, all $G$-stable ideals of $S:=C_I$ are generated
by their intersection with $R:= S^G$.  Therefore, the contraction 
map $\Spec^G\!S \to \Spec R$, $\fP \mapsto \fP \cap R$, is injective.  
If $\fa$ is an ideal of $R$, then its extension $\til\fa:= \fa S$ is a $G$-stable ideal of $S$.
Moreover, $\til\fa\cap R = \fa$, because $S$ is free over $R$ by \cite[2.3]{mL13}. Thus, $\til\fa$ is the
only $G$-stable ideal of $S$ that contracts to $\fa$. Now let $\fp \in \Spec R$ be given and
choose an ideal $\fP$ of $S$ maximal subject to the condition $\fP \cap R = \fp$. Then $\fP$ is prime
and hence so is its $G$-core by Proposition~\ref{P:IntPrime}. 
Thus, $\fP\byG \in \Spec^G\!S$ and $\fP\byG = \til\fp$ by the foregoing. 
This proves surjectivity of the contraction map and that the inverse is given by extension.
The statements about $G$-equivariance and preservation of inclusions in the lemma are clear.
\end{proof}


\section{Semiprimeness}
\label{S:Semi}


\subsection{Reformulations}
\label{SS:Reformulation}

We start this section by giving several reformulations, in terms of the semiprime radical
operator $\sqrt{\bdot\phantom{!}}$, of the conclusion of Theorem~\ref{T:Semi},
which is equivalent to (ii) in the lemma below. 
The semiprime radical of a subset $X \subseteq A$, by definition,
is the unique smallest semiprime ideal of $A$ containing $X$:
\[
\sqrt X = \bigcap_{\substack{ P \in \Spec A \\ P \supseteq X}} P\,.
\] 
We continue to assume that $A \in \ModAlg H$; the Hopf algebra $H$ can be arbitrary for now.

\begin{lem}
\label{L:Reformulation}
The following are equivalent:
\begin{enumerate}
\item[(i)]
If $J$ is an $H$-ideal of $A$, then so is $\sqrt J$;
\item[(ii)]
for all ideals $I$ of $A$, the $H$-core $\sqrt{I}\byH$ is semiprime;
\item[(iii)]
$H.\sqrt I \subseteq \sqrt{H.I}$ for any ideal $I$ of $A$.
\end{enumerate}
\end{lem}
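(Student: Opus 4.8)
The plan is to translate everything into a short dictionary between three operators on ideals of $A$ — the semiprime radical $\sqrt{\bdot\phantom{!}}$, the $H$-core $\bdot\byH$, and the passage from an ideal $I$ to the smallest $H$-ideal of $A$ containing it — and then obtain the three statements as a cycle (i) $\Rightarrow$ (ii) $\Rightarrow$ (iii) $\Rightarrow$ (i). No deep input is required; the argument is entirely a matter of organizing elementary facts in the right order, and in particular $H$ can remain arbitrary throughout.

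First I would record the standard properties of $\sqrt{\bdot\phantom{!}}$: for subsets $X \subseteq Y$ of $A$ one has $X \subseteq \sqrt X$, $\sqrt X \subseteq \sqrt Y$ and $\sqrt{\sqrt X} = \sqrt X$ (a prime of $A$ contains $\sqrt X$ exactly when it contains $X$); moreover $\sqrt X$ is the smallest semiprime ideal of $A$ containing $X$, and it depends only on the ideal generated by $X$, so in particular $\sqrt{H.I} = \sqrt{A(H.I)A}$. Next, for an ideal $I$ of $A$ I would check that $J_0 := A(H.I)A$ is the smallest $H$-ideal of $A$ containing $I$: it contains $I$ because $1.a = a$; it is $H$-stable because $h.(a\,w\,b) = (h_1.a)(h_2.w)(h_3.b)$ with $h_2.w \in H.(H.I) = H.I$ by the measuring axiom (no cocommutativity needed); and any $H$-ideal containing $I$ contains $H.I$ and hence $A(H.I)A$. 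Finally I would recall from \S\ref{SS:IntroCores} that $I\byH$ is by definition the largest $H$-ideal of $A$ contained in $I$, so that an $H$-ideal $J$ with $J \subseteq K$ automatically satisfies $J \subseteq K\byH$.

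With this dictionary in place, each implication is a one-line manipulation. For (i) $\Rightarrow$ (ii): given an ideal $I$, put $J = \sqrt I\byH$, an $H$-ideal; then $\sqrt J$ is an $H$-ideal by (i), and from $J \subseteq \sqrt I$ we get $\sqrt J \subseteq \sqrt{\sqrt I} = \sqrt I$, whence $\sqrt J \subseteq \sqrt I\byH = J$ by maximality of the core, so $J = \sqrt J$ is semiprime. For (ii) $\Rightarrow$ (iii): with $J_0 = A(H.I)A$ as above, so $\sqrt{H.I} = \sqrt{J_0}$, the $H$-ideal $\sqrt{J_0}\byH$ is semiprime by (ii) and contains $J_0 \supseteq I$, hence $\sqrt I \subseteq \sqrt{J_0}\byH$; being $H$-stable, $\sqrt{J_0}\byH$ then gives $H.\sqrt I \subseteq \sqrt{J_0}\byH \subseteq \sqrt{J_0} = \sqrt{H.I}$. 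For (iii) $\Rightarrow$ (i): if $J$ is an $H$-ideal, take $I = J$ in (iii) to get $H.\sqrt J \subseteq \sqrt{H.J} \subseteq \sqrt J$ (using $H.J \subseteq J$), so $\sqrt J$ is an $H$-submodule, and being a semiprime ideal it is an $H$-ideal. I do not expect a genuine obstacle here; the only point that needs care is to keep the two extremal constructions — ``smallest semiprime ideal containing $X$'' and ``smallest $H$-ideal containing $I$'' — cleanly separated, so that the nested inclusions $I \subseteq J_0 \subseteq \sqrt{J_0}\byH$ and $\sqrt J \subseteq \sqrt I\byH$ invoked above are each justified.
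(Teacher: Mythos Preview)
Your proof is correct and follows essentially the same cycle (i) $\Rightarrow$ (ii) $\Rightarrow$ (iii) $\Rightarrow$ (i) as the paper, with the same key observation in (ii) $\Rightarrow$ (iii) that the ideal $J_0$ generated by $H.I$ is an $H$-ideal, so that $J_0 \subseteq \sqrt{J_0}\byH$ and hence $\sqrt I \subseteq \sqrt{J_0}\byH$ by semiprimeness of the latter. The only cosmetic difference is that the paper reduces (ii) to the case where $I$ itself is semiprime before running your argument for (i) $\Rightarrow$ (ii), but the substance is identical.
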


\begin{proof}
(i) $\Rightarrow$ (ii). 
We may assume that $I$ is semiprime. Then $\sqrt{I\byH} \subseteq \sqrt I =I$, since 
$\sqrt{\bdot\phantom{!}}$ preserves inclusions. In fact, $\sqrt{I\byH} \subseteq I\byH$,
because $\sqrt{I\byH}$ is an $H$-ideal by (i).
The reverse inclusion being trivial, it follows that $I\byH = \sqrt{I\byH}$ is semiprime.

(ii) $\Rightarrow$ (iii). 
Let $J$ denote the ideal of $A$ that is generated by the subset $H.I \subseteq A$. Then 
$\sqrt I \subseteq \sqrt J = \sqrt{H.I}$ and $J$
is easily seen to be an $H$-ideal. (If the antipode $\ant$ is bijective, then $J = H.I$ \cite[Exercise 10.4.3]{mL18}.)
Thus, $J = J\byH \subseteq \sqrt{J}\byH$ and the latter ideal
is semiprime by (ii). It follows that $\sqrt J = \sqrt{J\byH} \subseteq \sqrt{J}\byH$.
Again, the reverse inclusion is clear; so $\sqrt{J} = \sqrt{J}\byH$.
Therefore, $H.\sqrt I \subseteq H.\sqrt J = \sqrt J = \sqrt{H.I}$.

(iii) $\Rightarrow$ (i). Specialize (iii) to the case where $I=J$ is an $H$-ideal. 
\end{proof}


\subsection{Extending the base field} 
\label{SS:Reduction}

For the proof of Theorem~\ref{T:Semi}, we may work over an algebraically closed base field. 
This follows by taking $K$ to be an algebraic closure of $\k$ in the argument below.

Let $K/\k$ be any field extension and put $H' = H \otimes K$
and $A' = A \otimes K$. Then $A' \in \ModAlg{H'}$ and
$H'$ is cocommutative if $H$ is so. Assuming Theorem~\ref{T:Semi} to hold for $A'$, our goal is to show
that it also holds for $A$. So let $I$ be a semiprime ideal of $A$. 
Viewing $A$ as being contained in $A'$ in the usual way, $IA'$ is an ideal of $A'$ satisfying
$IA' \cap A = I$. By Zorn's Lemma, we may choose an ideal $I'$ of $A'$ that is
maximal subject to the condition $I'\cap A = I$. Then $I'$ is semiprime. For, if $J$ is any ideal of $A'$ 
such that $J \supsetneqq I'$, then $J\cap A \supsetneqq I$ by maximality of $I'$, and so $(J\cap A)^2
\nsubseteq I$ by semiprimeness of $I$. Since $(J\cap A)^2 \subseteq J^2 \cap A$, it follows that $J^2 
\nsubseteq I'$, proving that $I'$ is semiprime. Therefore, by our assumption, the core $I'\text{\sl :}H'$ is
semiprime. Since the extension $A \into A'$ is centralizing, it follows that $(I'\text{\sl :}H') \cap A$
is a semiprime ideal of $A$. Finally,  
$(I'\text{\sl :}H') \cap A = \{a \in A\mid H'.a \subseteq I'\} = \{a \in A \mid H.a \subseteq I'\cap A = I \} = I\byH$
by \eqref{E:IbyH}, giving the desired conclusion that $I\byH$ is semiprime. 


\subsection{Enveloping algebras}
\label{SS:Env}

\subsubsection{} 
\label{SSS:Power}
For any ring $R$, let $R\llbracket X_\lambda \rrbracket_{\lambda\in \Lambda}$ 
denote the ring of formal power series in the 
commuting variables $X_\lambda$ $(\lambda \in \Lambda)$
over $R$; see \cite[Chap.~III, \S 2, n$^{\rm o}$ 11]{nB70}.

\begin{lem} 
\label{L:primeness}
Let $R$ be a ring,
let $\Lambda$ be any set, and let $S$
be a subring of $R\llbracket X_\lambda \rrbracket_{\lambda\in \Lambda}$
such that $S$ maps onto $R$ under the homomorphism
$R\llbracket X_\lambda \rrbracket_{\lambda\in \Lambda} \to R$, 
$X_\lambda \mapsto 0$. If $R$ is prime (resp., semiprime, a domain) then so is $S$.
\end{lem}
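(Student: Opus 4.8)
The plan is to work entirely with the leading-term / lowest-degree valuation on the power series ring. Write $\widehat R = R\llbracket X_\lambda \rrbracket_{\lambda\in\Lambda}$ and let $\pi \colon \widehat R \to R$ be the augmentation $X_\lambda \mapsto 0$. I will treat the three cases (prime, semiprime, domain) in a uniform way, since the mechanism is the same; the domain case is genuinely easier and can be disposed of first or folded into the prime case with $0$ as the relevant ideal. The key device is: for a nonzero element $s \in \widehat R$, look at its terms of lowest total degree in the $X_\lambda$; this ``initial part'' lives in a polynomial ring over $R$ in finitely many of the variables (only finitely many $X_\lambda$ occur in any single power series), and leading terms multiply, \emph{provided} the relevant leading coefficients in $R$ do not annihilate each other. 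That proviso is exactly where primeness/semiprimeness of $R$ enters.

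First I would reduce to $\Lambda$ finite, or at least handle the bookkeeping: any finite set of elements of $\widehat R$ involves only finitely many variables, so when testing the prime/semiprime condition we may as well assume $\Lambda = \{1,\dots,m\}$ and work in $R[[X_1,\dots,X_m]]$, equivalently iterate the one-variable case $R[[X]]$. Thus it suffices to prove: if $R$ is prime (resp. semiprime, a domain), and $S \subseteq R[[X]]$ is a subring with $\pi(S) = R$, then $S$ is prime (resp. semiprime, a domain); then induct on $m$, noting $R[[X_1,\dots,X_m]] = \big(R[[X_1,\dots,X_{m-1}]]\big)[[X_m]]$ and that the intermediate ring $S_{m-1} := $ (image of $S$ under killing $X_m$) again surjects onto $R$. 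Actually it is cleaner to avoid the induction on the subring and instead work directly: order the finitely many monomials in $X_1,\dots,X_m$ by total degree (refined by some fixed linear order to break ties), and for $0 \neq s \in \widehat R$ let $\mathbf{in}(s) \in R$ be the coefficient of the smallest monomial occurring in $s$. The crucial multiplicativity statement is: if $a, b \in R$ with $a R b \neq 0$, and $s, t \in \widehat R$ have $\mathbf{in}(s) = a$, $\mathbf{in}(t) = b$, then for a suitable $r \in R$ one has $\mathbf{in}(s r t) = a r b \neq 0$ — because in the product $srt$ the coefficient of the (sum of the two) smallest monomials is exactly $arb$ plus contributions from strictly larger monomials, of which there are none at that degree.

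With that in hand the three conclusions follow quickly. \emph{Domain:} if $s, t \in S$ are nonzero then $\mathbf{in}(s), \mathbf{in}(t) \neq 0$ in the domain $R$, so $\mathbf{in}(s)\,\mathbf{in}(t) \neq 0$, hence $st \neq 0$. \emph{Prime:} suppose $I, J$ are nonzero ideals of $S$ with $IJ = 0$; pick $0 \neq s \in I$, $0 \neq t \in J$. Then $\{\mathbf{in}(x) : 0 \neq x \in I\}$ and likewise for $J$ generate nonzero ideals of $R$ — here I use $\pi(S) = R$ to multiply elements of $I$ by arbitrary elements of $R$ lifted into $S$, keeping them in $I$ and adjusting the initial term — so by primeness of $R$ there exist $x \in I$, $y \in J$ and $r \in R$ with $\mathbf{in}(x)\, r\, \mathbf{in}(y) \neq 0$; lifting $r$ to $\widehat r \in S$ with $\pi(\widehat r) = r$ gives $x \widehat r y \in IJ$ with nonzero initial term, contradiction. \emph{Semiprime:} identical argument with $I = J$ a nonzero ideal satisfying $I^2 = 0$, using that in a semiprime ring a nonzero ideal cannot square to zero.

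\textbf{Main obstacle.} The one subtle point — and the place I expect to spend real care — is controlling initial terms under multiplication by scalars from $R$ while staying inside $S$: $S$ is only assumed to be a \emph{subring} surjecting onto $R$, not to contain $R$, so an element of $R$ is realized inside $S$ only as $\pi(\widehat r) = r$ with $\widehat r = r + (\text{higher order})$, and one must check that multiplying $x \in I$ by such an $\widehat r$ still has initial term $\mathbf{in}(x)\cdot r$ (it does, since the higher-order tail of $\widehat r$ contributes only to strictly larger monomials) and lies in $I$ (it does, $I$ being an ideal of $S$ and $\widehat r \in S$). Once this bookkeeping with the monomial order and the augmentation is set up correctly, the argument is the standard ``leading terms multiply'' proof and the rest is routine.
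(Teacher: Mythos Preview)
Your core mechanism---pick the coefficient at the least monomial, lift $r\in R$ to $\hat r\in S$ via $\pi(\hat r)=r$, and observe that $s\hat r t$ has least coefficient $s_{\min}\,r\,t_{\min}$---is exactly the paper's argument, and your identification of the ``main obstacle'' (that $R$ sits in $S$ only up to higher-order terms) is the right point to emphasise.

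There is one genuine slip, however. Your reduction to finitely many variables rests on the parenthetical claim that ``only finitely many $X_\lambda$ occur in any single power series,'' and this is false when $\Lambda$ is infinite: an element of $R\llbracket X_\lambda\rrbracket_{\lambda\in\Lambda}$ is an arbitrary function $\ZZ_+^{(\Lambda)}\to R$, so for instance $\sum_{\lambda} X_\lambda$ is a legitimate power series involving every variable. Consequently the passage to $R[[X_1,\dots,X_m]]$ is not available, and your phrase ``order the finitely many monomials'' is doubly off (there are infinitely many monomials even for $m$ finite). The paper sidesteps this entirely by fixing, from the outset, a total order on the full monoid $M=\ZZ_+^{(\Lambda)}$ that is a well-order compatible with addition and with least element $\mathbf 0$; such orders exist for arbitrary $\Lambda$ (well-order $\Lambda$ and take degree-lex, or cite the reference the paper gives). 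Once you have that well-order, your $\mathbf{in}(s)$ is defined for every nonzero $s$ with no finiteness hypothesis, and the rest of your argument goes through verbatim. So: drop the reduction paragraph, state the monomial order on $M$ for arbitrary $\Lambda$, and you are done.
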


\begin{proof}
We write monomials in the variables $X_\lambda$ as 
$X^{\bn} = \prod_\lambda X_\lambda^{\bn(\lambda)}$ $(\bn \in M)$, where
$M = \ZZ_+^{(\Lambda)}$ denotes the additive monoid of all functions 
$\bn \colon \Lambda \to \ZZ_+$ such that $\bn(\lambda) = 0$ for almost all $\lambda \in \Lambda$. 
Fix a total order $<$ on $M$ 
having the following properties (e.g., \cite[Example 2.5]{mAcH07}): 
every nonempty subset of $M$ has a smallest element; the zero function $\bm{0}$
is the smallest element of $M$; and 
$\bn < \bm{m}$ implies $\bn + \bm{r} < \bm{m}+ \bm{r}$ for all $\bn, \bm{m}, \bm{r} \in M$.

For any $0 \neq s  = \sum_{\bn \in M} s_{\bn}X^{\bn} \in 
R\llbracket X_\lambda \rrbracket_{\lambda\in \Lambda}$, 
we may consider its lowest coefficient, $s_{\min}:= s_{\bm{m}}$ with 
$\bm{m} = \min \{ \bn \in M \mid s_{\bn} \neq 0 \}$.
If $R$ is prime and $0 \neq s,t \in S$ are given, then 
$0 \neq s_{\min}rt_{\min}$ for some $r\in R$. 
By assumption, there exists an element $u \in S$ having the form
$u = r + \sum_{\bn \neq \bm{0}} u_{\bn}X^{\bn}$. It follows that $sut \neq 0$, with
$(sut)_{\min} = s_{\min}rt_{\min}$. This proves that $S$
is prime. For the assertions where $R$ is semiprime or a domain, take $s=t$ or $r=1$,
respectively.
\end{proof}

\subsubsection{}
\label{SSS:Env2}
Now let $H = U\fg$ be the enveloping algebra of an arbitrary Lie $\k$-algebra $\fg$ and assume that $\ch \k = 0$.
For the proof of the next proposition, we recall the structure of the convolution algebra $\Hom_\k(H,R)$ for 
an arbitrary $\k$-algebra $R$. Let $(e_\lambda)_{\lambda\in \Lambda}$ be a 
$\k$-basis of $\fg$ and fix a total order of the index set $\Lambda$. Put
$M = \ZZ_+^{(\Lambda)}$ as in the proof of Lemma~\ref{L:primeness} and,
for each $\bn \in M$, put 
$e_{\bn} = \prod_{\lambda}^{<} \frac{1}{\bn(\lambda)!}\,e_\lambda^{\bn(\lambda)} \in H$, where the superscript $<$
indicates that the factors occur in the order of increasing $\lambda$. The elements $e_{\bn}$ form
a $\k$-basis of $H$ by the Poincar\'e-Birkhoff-Witt Theorem, and the comultiplication of
$H$ is given by 
$\Delta e_{\bn} = \sum_{\br + \bs = \bn} e_{\br} \otimes e_{\bs}$; see \cite[Example 9.5]{mL18}.
Writing $X^{\bn} = \prod_\lambda X_\lambda^{\bn(\lambda)}$ as in the proof of Lemma~\ref{L:primeness}, we obtain 
an isomorphism of $\k$-algebras, 
\[
\phi \colon \Hom_\k(H,R) \iso R\llbracket X_\lambda \rrbracket_{\lambda\in \Lambda}\,, 
\quad f \mapsto \sum_{\bn \in M} f(e_{\bn})X^{\bn} .
\]
Under this isomorphism, the algebra map
$u^* \colon \Hom_\k(H,R) \to R$\,, $f \mapsto f(1)$\,, coming from the
unit map $u = u_H \colon \k \to H$ translates into the map
$R\llbracket X_\lambda \rrbracket_{\lambda\in \Lambda} \onto R$, 
$X_\lambda \mapsto 0$, as considered in Lemma~\ref{L:primeness}.

The proposition below is not new; it can be found in Dixmier's book
\cite[3.3.2 and 3.8.8]{jD96}, albeit with a very different proof. 
Recall that an ideal of a ring is said to be \emph{completely prime} if the quotient
is a domain.

\begin{prop} 
\label{P:primeness}
Let $H = U\fg$ be the enveloping algebra of a Lie $\k$-algebra $\fg$,
let $A \in \ModAlg H$, and let $I$ be an ideal of $A$. Assume that $\ch\k = 0$. If
$I$ is prime, semiprime or completely prime, then $I\byH$ is 
likewise. 
\end{prop}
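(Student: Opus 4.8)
The plan is to deduce this proposition from the convolution-algebra description set up in \S\ref{SSS:Env2}, combined with the reduction lemmas of \S\ref{SS:Reformulation} and \S\ref{SS:Reduction}. The key observation is that $I\byH = \d^{-1}(\Hom_\k(H,I)) \cap A$ in the notation of \S\ref{SS:Conv}, and more usefully that $\Hom_\k(H,I)$ sits inside $\Hom_\k(H,A)=B$ as the ideal corresponding, under the isomorphism $\phi$ of \S\ref{SSS:Env2}, to the power series ring $I\llbracket X_\lambda\rrbracket \subseteq A\llbracket X_\lambda\rrbracket$. So if $A/I$ is a domain (resp.\ prime, semiprime), then Lemma~\ref{L:primeness} applied with $R = A/I$ and the subring $S = \d(A/I)\cdot(\text{image of }A/I)$ — or more simply $S$ the image of $A$ in $(A/I)\llbracket X_\lambda\rrbracket$ under $a\mapsto \phi(\d a \bmod I)$, which surjects onto $A/I$ via $X_\lambda\mapsto 0$ because $u^*\circ\d = \Id_A$ — shows that $S$ inherits the property, whence so does its prime image, which I will identify with $A/I\byH$.

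Carrying this out: first I would observe that $I\byH = \Ker(A \xrightarrow{\bar\d} \Hom_\k(H,A/I))$ where $\bar\d$ is $\d$ followed by the quotient map; this is immediate from \eqref{E:IbyH} since $H.a\subseteq I \iff (h\mapsto h.a)$ lands in $\Hom_\k(H,I) \iff$ its image in $\Hom_\k(H,A/I)$ vanishes. Hence $A/I\byH \cong \bar\d(A) =: S$, a subring of $\Hom_\k(H,A/I)$. Next I would invoke the isomorphism $\phi$ of \S\ref{SSS:Env2} with $R = A/I$, under which $\Hom_\k(H,A/I) \cong (A/I)\llbracket X_\lambda\rrbracket_{\lambda\in\Lambda}$ and the counit map $u^*\colon f\mapsto f(1)$ becomes the augmentation $X_\lambda\mapsto 0$. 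Since $u^*\circ\bar\d$ is just the quotient $A\onto A/I$ (because $(\d a)(1) = 1.a = a$), the subring $S$ surjects onto $A/I$ under that augmentation. Now Lemma~\ref{L:primeness} applies verbatim: if $A/I$ is a domain, prime, or semiprime, then so is $S \cong A/I\byH$, i.e.\ $I\byH$ is completely prime, prime, or semiprime respectively.

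One point I would be careful about is that Lemma~\ref{L:primeness} requires a \emph{ring} homomorphism $S\to R$ that is surjective, and that $\bar\d$ really is an algebra map — this is exactly the content of \S\ref{SS:Conv}, where $\d\colon A\to B$ is noted to be a $\k$-algebra embedding, and composing with the algebra surjection $\Hom_\k(H,A)\onto\Hom_\k(H,A/I)$ keeps it a ring map; injectivity of $\bar\d$ is not needed since $S$ is by definition the image. I should also note that no cocommutativity or pointedness of $H = U\fg$ is used here, nor is algebraic closedness of $\k$, so the proposition holds in the stated generality; the only hypothesis genuinely used is $\ch\k = 0$, which enters through the PBW-based identification $\phi$ and in particular through the divided-power basis $e_{\bn}$, whose denominators $\bn(\lambda)!$ are invertible precisely in characteristic $0$.

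The main obstacle I anticipate is not conceptual but bookkeeping: making sure the identification of the ideal $\Hom_\k(H,I)\subseteq\Hom_\k(H,A)$ with $I\llbracket X_\lambda\rrbracket\subseteq A\llbracket X_\lambda\rrbracket$ under $\phi$ is literally compatible with all the maps, so that $\bar\d(A)$ corresponds to a subring $S$ of $(A/I)\llbracket X_\lambda\rrbracket$ to which Lemma~\ref{L:primeness} can be applied directly rather than after some twist. Once that compatibility is pinned down — and it is routine given that $\phi$ is an algebra isomorphism natural in $R$ — the proof is a one-line application of Lemma~\ref{L:primeness}. (For the completely prime case one could alternatively bypass $\phi$ and argue directly that a subring of a power series ring over a domain is a domain, but going through Lemma~\ref{L:primeness} handles all three cases uniformly.)
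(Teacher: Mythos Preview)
Your proposal is correct and follows essentially the same route as the paper: identify $I\byH$ as the kernel of the map $\bar\d = \d_I \colon A \to \Hom_\k(H,A/I)$, transport the image $S = \bar\d(A)$ into $(A/I)\llbracket X_\lambda\rrbracket$ via the PBW-based isomorphism $\phi$ of \S\ref{SSS:Env2}, observe that $S$ surjects onto $A/I$ under $X_\lambda \mapsto 0$ because $(u^*\circ\bar\d)(a) = a + I$, and apply Lemma~\ref{L:primeness}. Your side remarks (that $\bar\d$ is an algebra map, that only $\ch\k = 0$ is genuinely used via the divided-power basis) are accurate and worth recording.
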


\begin{proof}
By \eqref{E:IbyH}, the core $I\byH$ is identical to the kernel of the map
$\d_I \colon A \to \Hom_\k(H,A/I)$ that is given by $\d_I(a) = (h \mapsto h.a + I)$.
We need to show that the properties of being prime, semiprime, or a domain
all transfer from $A/I$ to the subring $\d_IA \subseteq \Hom(H,A/I)$ or, equivalently, to the subring
$S \subseteq (A/I)\llbracket X_\lambda \rrbracket_{\lambda\in \Lambda}$ that corresponds to $\d_IA$
under the above isomorphism $\phi$. Consider the map $u^* \colon \Hom_\k(H,A/I) \to A/I$\,,
$f \mapsto f(1)$, and note that $(u^*\circ\d_I)(a) = a+I$ for $a \in A$. 
Therefore, $S$ maps onto $A/I$
under the map $(A/I)\llbracket X_\lambda \rrbracket_{\lambda\in \Lambda} \to A/I$, 
$X_\lambda \mapsto 0$. Now all assertions follow from Lemma~\ref{L:primeness}. 
\end{proof}

\subsubsection{}
\label{SSS:Env3}
For an arbitrary cocommutative Hopf algebra $H$, we cannot expect a result as strong as 
Proposition~\ref{P:primeness}: group algebras provide easy counterexamples
to the primeness and complete primeness assertions.  
Indeed, let $H = \k G$ be the group $\k$-algebra of the group $G$ and let $A \in \ModAlg H$.
Then $I\byH = \bigcap_{g \in G} g.I$ for any ideal $I$ of $A$. If $I$ is
semiprime, then so are all $g.I$, because each $g \in G$ acts on $A$ by algebra automorphisms, and
hence $\bigcap_{g \in G} g.I$ will be semiprime also. However, primeness and complete primeness, while
inherited by each $g.I$, are generally
lost upon taking the intersection.


\subsection{Proof of Theorem~\ref{T:Semi}}
\label{SS:ProofSemi}

Let $H$ be cocommutative Hopf and assume that $\ch\k = 0$ and that $\k$ is
algebraically closed, as we may by \S \ref{SS:Reduction}.
Then $H$ has the structure of a smash product,  
$H \cong U\# V$, where $U$ is the enveloping algebra of the Lie algebra
of primitive elements of $H$ and $V$ is the group algebra of the group of grouplike 
elements of $H$; see \cite[\S 13.1]{mS69} or \cite[\S 15.3]{dR12}. 
Thus, both $U$ and $V$ are Hopf subalgebras of $H$ and
$H = UV$, the $\k$-space spanned by all products $uv$ with $u \in U$ and $v \in V$.
Viewing $A \in \ModAlg U$ and $A \in \ModAlg V$ by restriction,
repeated application of \eqref{E:IbyH} gives the following equality for any ideal $I$ of $A$:
\[
I\byH = \{ a \in A \mid UV.a \subseteq I\} = \{ a \in A \mid V.a \subseteq I\text{\sl :} U\} = (I\text{\sl :}U)\text{\sl :} V.
\]
If $I$ is semiprime, then so is $I\text{\sl :}U$ (Proposition~\ref{P:primeness}). Our remarks
on group algebras in the first paragraph of this proof further give semiprimeness of $(I\text{\sl :}U)\text{\sl :} V$.
Thus, $I\byH$ is semiprime and Theorem~\ref{T:Semi} is proved. \qed


\begin{ack}
We thank the referee for pointing out reference \cite{jD96} in 
connection with Proposition~\ref{P:primeness} above.
\end{ack}


\def\cprime{$'$}
\providecommand{\bysame}{\leavevmode\hbox to3em{\hrulefill}\thinspace}
\providecommand{\MR}{\relax\ifhmode\unskip\space\fi MR }
\providecommand{\MRhref}[2]{%
  \href{http://www.ams.org/mathscinet-getitem?mr=#1}{#2}
}
\providecommand{\href}[2]{#2}


\end{document}